\renewcommand{\epsilon}{\varepsilon}
\newcommand{\newsection}[1]
{\subsection{#1}\setcounter{theorem}{0} \setcounter{equation}{0}
\par\noindent}
\newtheorem{theorem}{Theorem}
\newtheorem{lemma}[theorem]{Lemma}
\newtheorem{corr}[theorem]{Corollary}
\newtheorem{proposition}[theorem]{Proposition}
\newtheorem{deff}[theorem]{Definition}
\newtheorem{remark}[theorem]{Remark}
\newcommand{\bth}{\begin{theorem}}
\newcommand{\ble}{\begin{lemma}}
\newcommand{\bcor}{\begin{corr}}
\newcommand{\bdeff}{\begin{deff}}
\newcommand{\bprop}{\begin{proposition}}
\newcommand{\ele}{\end{lemma}}
\newcommand{\ecor}{\end{corr}}
\newcommand{\edeff}{\end{deff}}
\newcommand{\eprop}{\end{proposition}}
\newcommand{\cd}{\, \cdot\, }
\renewcommand{\Pi}{\varPi}
\renewcommand{\epsilon}{\varepsilon}
\newcommand{\Aut}{\text{Aut}(p)}
\begin{document}

\subjclass[2000]{Primary, 35F99; Secondary 35L20, 42C99}
\keywords{Eigenfunction estimates, negative curvature}

\title[Improved restriction estimates for nonpostive curvature manifolds]{An improvement on eigenfunction restriction estimates for compact boundaryless Riemannian manifolds with nonpositive sectional curvature}
\thanks{The author would like to thank her advisor, Christopher Sogge, cordially for his generous help and unlimited patience.}

\author{Xuehua Chen}
\address{Johns Hopkins University, Baltimore, MD}
\address{Email address: xchen@math.jhu.edu}

\maketitle

\begin{abstract}
Let $(M,g)$ be an $n$-dimensional compact boundaryless Riemannian manifold with nonpositive sectional curvature, then our conclusion is that we can give improved estimates for the $L^p$ norms of the restrictions of eigenfunctions to smooth submanifolds of dimension $k$, for $p>\dfrac{2n}{n-1}$ when $k=n-1$ and $p>2$ when $k\leq n-2$, compared to the general results of Burq, G\'erard and Tzvetkov \cite{burq}. Earlier, B\'erard \cite{Berard} gave the same improvement for the case when $p=\infty$, for compact Riemannian manifolds without conjugate points for $n=2$, or with nonpositive sectional curvature for $n\geq3$ and $k=n-1$. In this paper, we give the improved estimates for $n=2$, the $L^p$ norms of the restrictions of eigenfunctions to geodesics. Our proof uses the fact that, the exponential map from any point in $x\in M$ is a universal covering map from $\mathbb{R}^2\backsimeq T_{x}M$ to $M$, which allows us to lift the calculations up to the universal cover $(\mathbb{R}^2,\tilde{g})$, where $\tilde{g}$ is the pullback of $g$ via the exponential map. Then we prove the main estimates by using the Hadamard parametrix for the wave equation on $(\mathbb{R}^2,\tilde{g})$, the stationary phase estimates, and the fact that the principal coefficient of the Hadamard parametrix is bounded, by observations of Sogge and Zelditch in \cite{SZ}. The improved estimates also work for $n\geq 3$, with $p>\frac{4k}{n-1}$. We can then get the full result by interpolation.
\end{abstract}

\newsection{Introduction}

Let $(M,g)$ be a compact, smooth $n$-dimensional boundaryless Riemannian manifold with nonpositive sectional curvature. Denote $\Delta_g$ the Laplace operator associated to the metric $g$, and $d_g(x,y)$ the geodesic distance between $x$ and $y$ associated with the metric $g$. We know that there exist $\lambda\geq0$ and $\phi_\lambda\in L^2(M)$ such that $-\Delta_g\phi_\lambda=\lambda^2\phi_\lambda$, and we call $\phi_\lambda$ an eigenfunction corresponding to the eigenvalue $\lambda$. Let $\{e_j(x)\}_{j\in\mathbb{N}}$ be an $L^2(M)$-orthonormal basis of eigenfunctions of $\sqrt{-\Delta_g}$, with eigenvalues $\{\lambda_j\}_{j\in\mathbb{N}}$, and $\{E_j(x)\}_{j\in\mathbb{N}}$ be the projections onto the $j$-th eigenspace, restricted to $\Sigma$, i.e. $E_jf(x)=e_j(x)\int_M e_j(y)f(y)dy$, for any $f\in L^2(M)$, $x\in\Sigma$. We may consider only the positive $\lambda$'s as we are interested in the asymptotic behavior of the eigenfunction projections. Our main Theorem is the following.

\bth\label{theorem1}
Let $(M,g)$ be a  compact smooth $n$-dimensional boundaryless Riemannian manifold with nonpositive curvature, and $\Sigma$ be an $k$-dimensional smooth submanifold on $M$. Let $\{E_j(x)\}_{j\in\mathbb{N}}$ be the projections onto the $j$-th eigenspace, restricted to $\Sigma$. Given any $f\in L^2(M)$, we have the following estimate:

When $k=n-1$,
\begin{equation}\label{1.1}
||\sum_{|\lambda_j-\lambda|\leq(\log\lambda)^{-1}}E_jf||_{L^p(\Sigma)}\lesssim\frac{\lambda^{\delta(p)}}{(\log \lambda)^{\frac{1}{2}}}||f||_{L^2(M)},\ \ \ \forall p>\dfrac{2n}{n-1};
\end{equation}

When $k\leq n-2$,
\begin{equation}\label{1.2}
||\sum_{|\lambda_j-\lambda|\leq(\log\lambda)^{-1}}E_jf||_{L^p(\Sigma)}\lesssim\frac{\lambda^{\delta(p)}}{(\log \lambda)^{\frac{1}{2}}}||f||_{L^2(M)},\ \ \ \forall p>2,
\end{equation}
where $\delta(p)=\frac{n-1}{2}-\frac{k}{p}$.
\end{theorem}
Note that we may assume that $(M,g)$ is also simply connected in the proof.

The following corollary is an immediate consequence of this theorem.
\begin{corr}
Let $(M,g)$ be a  compact smooth $n$-dimensional boundaryless Riemannian manifold with nonpositive curvature, and $\Sigma$ be an $k$-dimensional smooth submanifold on $M$. For any eigenfunction $\phi_\lambda$ of $\Delta_g$ s.t. $-\Delta_g\phi_\lambda=\lambda^2\phi_\lambda$, we have the following estimate:

When $k=n-1$,
\begin{equation}\label{1.3}
||\phi_\lambda||_{L^p(\Sigma)}\lesssim\frac{\lambda^{\delta(p)}}{(\log \lambda)^{\frac{1}{2}}}||\phi_\lambda||_{L^2(M)},\ \ \ \forall p>\dfrac{2n}{n-1};
\end{equation}

When $k\leq n-2$,
\begin{equation}\label{1.4}
||\phi_\lambda||_{L^p(\Sigma)}\lesssim\frac{\lambda^{\delta(p)}}{(\log \lambda)^{\frac{1}{2}}}||\phi_\lambda||_{L^2(M)},\ \ \ \forall p>2,
\end{equation}
where $\delta(p)=\frac{n-1}{2}-\frac{k}{p}$.
\end{corr}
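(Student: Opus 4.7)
The plan is to deduce the corollary as an immediate consequence of Theorem~\ref{theorem1} by specializing the test function to $f=\phi_\lambda$ itself. The key observation is that since $-\Delta_g\phi_\lambda=\lambda^2\phi_\lambda$, the eigenfunction $\phi_\lambda$ lies in the $\lambda^2$-eigenspace of $-\Delta_g$, which in the orthonormal basis $\{e_j\}$ is spanned precisely by those $e_j$ with $\lambda_j=\lambda$.

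Next, I would examine how the spectral sum on the left-hand side of \eqref{1.1} and \eqref{1.2} behaves for this choice of $f$. From the definition $E_jf(x)=e_j(x)\int_M e_j(y)f(y)\,dy$ together with orthonormality, $E_j\phi_\lambda$ vanishes unless $\lambda_j=\lambda$, in which case it picks out the $e_j$-component of $\phi_\lambda$. Since the window $|\lambda_j-\lambda|\leq(\log\lambda)^{-1}$ certainly contains the point $\lambda_j=\lambda$, summing over $j$ in the window gives
\[
\sum_{|\lambda_j-\lambda|\leq(\log\lambda)^{-1}}E_j\phi_\lambda\;=\;\sum_{\lambda_j=\lambda}\Bigl(\int_M e_j(y)\phi_\lambda(y)\,dy\Bigr)e_j\;=\;\phi_\lambda
\]
as functions on $M$, hence also after restriction to $\Sigma$.

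Finally, inserting $f=\phi_\lambda$ into \eqref{1.1} and \eqref{1.2} and using $||f||_{L^2(M)}=||\phi_\lambda||_{L^2(M)}$ immediately yields \eqref{1.3} and \eqref{1.4} in the two respective ranges of $p$. There is no genuine obstacle at this stage: all of the analytic content has been absorbed into Theorem~\ref{theorem1}, and the corollary is merely the observation that the shrinking-window spectral projection estimate specializes cleanly to a single eigenfunction, for which the sum collapses to one term (or one eigenspace) by orthogonality.
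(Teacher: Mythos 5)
Your proof is correct and is precisely the argument the paper intends when it states the corollary is an immediate consequence of Theorem~\ref{theorem1}: take $f=\phi_\lambda$, note that orthogonality forces $E_j\phi_\lambda=0$ unless $\lambda_j=\lambda$, so the windowed spectral sum collapses to $\phi_\lambda$ itself, and the stated bounds follow directly.
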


In \cite{rez}, Reznikov achieved weaker estimates for hyperbolic surfaces, which inspired this current line of research. In \cite{burq}, Theorem 3, Burq, G\'erard and Tzvetkov showed that given any $k$-dimensional submanifold $\Sigma$ of an $n$-dimensional compact boundaryless manifold $M$, for any $p>\dfrac{2n}{n-1}$ when $k=n-1$ and for any $p>2$ when $k\leq n-2$, one has
\begin{equation}\label{1.5}
||\phi_\lambda||_{L^p(\Sigma)}\lesssim\lambda^{\delta(p)}||\phi_\lambda||_{L^2(M)},
\end{equation}
while for $p=\frac{2n}{n-1}$ when $k=n-1$ and for $p=2$ when $k=n-2$ one has
\begin{equation}\label{1.6}
||\phi_\lambda||_{L^p(\Sigma)}\lesssim\lambda^{\delta(p)}(\log\lambda)^{\frac{1}{2}}||\phi_\lambda||_{L^2(M)}.
\end{equation}

Later on, Hu improved the result at one end point in \cite{Hu}, so that one has \eqref{1.5} for $p=\frac{2n}{n-1}$ when $k=n-1$. It is very possible that one can also improve the result at the other end point, where $p=2$, $k=n-2$, so that we also have \eqref{1.5} there. Our Theorem \ref{theorem4.1} gives an improvement for \eqref{1.5} of $(\log\lambda)^{-\frac{1}{2}}$ for $p\geq2$ for certain small $k$'s (See the remark after Theorem \ref{theorem4.1}).

Note that their proof of Theorem 3 in \cite{burq} indicates that for any $f\in L^2(M)$,
\begin{equation}\label{1.7}
||\sum_{|\lambda_j-\lambda|<1}E_jf||_{L^p(\Sigma)}\lesssim\lambda^{\delta(p)}||f||_{L^2(M)},
\end{equation}
for any $p\geq\dfrac{2n}{n-1}$ when $k=n-1$ and $p\geq2$ when $k\leq n-2$ except that there is an extra $(\log\lambda)^{\frac{1}{2}}$ on the right hand side when $p=2$ and $k=n-2$. In the proof, they constructed $\chi_\lambda=\chi(\sqrt{-\Delta_g}-\lambda)$ from $L^2(M)$ to $L^p(\Sigma)$, where $\chi\in\mathcal{S}(\mathbb{R})$ such that $\chi(0)=1$, and showed that $\chi_\lambda(\chi_\lambda)^*$ is an operator from $L^p(\Sigma)$ to $L^{p'}(\Sigma)$ with norm $O(\lambda^{2\delta(p)})$. That means, there exists at least an $\epsilon>0$ such that \begin{equation}\label{1.8}
||\sum_{|\lambda_j-\lambda|<\epsilon}E_jf||_{L^p(\Sigma)}\lesssim\lambda^{\delta(p)}||f||_{L^2(M)}.
\end{equation}

The reason why \eqref{1.8} is true can be seen in this way. Considering the dual form of
\begin{equation}
||\chi(\lambda-\sqrt{-\Delta_g})f||_{L^p(\Sigma)}\lesssim\lambda^{\delta(p)}||f||_{L^2(M)},
\end{equation}
which says
\begin{equation}\label{1.9}
||\sum_j\chi(\lambda-\lambda_j)E_j^*g||_{L^2(M)}\lesssim\lambda^{\delta(p)}||g||_{L^p(\Sigma)},
\end{equation}
where $E_j^*$ is the conjugate operator of $E_j$ such that $E_j^*g(x)=e_j(x)\int_\Sigma e_j(y)g(y)dy$, for any $g\in L^2(\Sigma)$ and $x\in M$. There exists an $\epsilon>0$ such that $\chi(t)>\frac{1}{2}$ when $|t|<\epsilon$ because we assumed that $\chi(0)=1$. Therefore, the square of the left hand side of \eqref{1.9} is
\begin{equation}
\sum_{|\lambda-\lambda_j|<\epsilon}||\chi(\lambda-\lambda_j)E_j^*g||_{L^2(M)}^2+\sum_{|\lambda-\lambda_j|>\epsilon}||\chi(\lambda-\lambda_j)E_j^*g||_{L^2(M)}^2\geq\frac{1}{4}\sum_{|\lambda-\lambda_j|<\epsilon}||E_j^*g||_{L^2(M)}^2.
\end{equation}
That means
\begin{equation}
||\sum_{|\lambda-\lambda_j|<\epsilon}E_j^*g||_{L^2(M)}\lesssim\lambda^{\delta(p)}||g||_{L^p(\Sigma)},
\end{equation}
which is the dual version of \eqref{1.8}.

If we divide the interval $(\lambda-1,\lambda+1)$ into $\frac{1}{\epsilon}$ sub-intervals whose lengths are $2\epsilon$, and apply the last estimate $\frac{1}{\epsilon}$ times, we get \eqref{1.7}. Thinking in this way, our estimates \eqref{1.1} and \eqref{1.2} are equivalent to the estimates for
\begin{equation}
||\sum_{|\lambda_j-\lambda|<\epsilon\log^{-1}\lambda} E_j||_{L^2(M)\rightarrow L^p(\Sigma)},
\end{equation}
for some number $\epsilon>0$, which is equivalent to estimating
\begin{equation}
||\chi(T(\lambda-\sqrt{-\Delta_g}))||_{L^2(M)\rightarrow L^p(\Sigma)},
\end{equation}
for $T\approx\log^{-1}\lambda$.

The estimates \eqref{1.5} and \eqref{1.6} are sharp when

1. $k\leq n-2$, $M$ is the standard sphere $\mathbb{S}^n$ and $\Sigma$ is any submanifold of dimension $k$; or

2. $k=n-1$ and $2\leq p\leq\dfrac{2n}{n-1}$, $M$  is the standard sphere $\mathbb{S}^n$ and $\Sigma$ is any hypersurface containing a piece of geodesic.

It is natural to try to improve it on Riemannian manifolds with nonpositive curvature. Recently, Sogge and Zelditch in \cite{SZ} showed that for any 2-dimensional compact boundaryless Riemannian manifold with nonpositive curvature one has
\begin{equation}\label{1.10}
\sup_{\gamma\in\Pi}||\phi_\lambda||_{L^{p}(\gamma)}/||\phi_\lambda||_{L^2(M)}=o(\lambda^{\frac{1}{4}}),\ \ \ \text{for}\ 2\leq p<4,
\end{equation}
where $\Pi$ denotes the space of all unit-length geodesics in $M$.
\eqref{1.7} is sharp for any compact manifolds, in the sense that we fix the scale of the spectral projection (See proof in \cite{burq}). If we are allowed to consider a smaller scale of spectral projection, then our theorem \ref{theorem1} is an improvement of $\sqrt{\log\lambda}$ for \eqref{1.7}, with the extra assumption that $M$ has nonpositive curvature. The corollary is an improvement of \eqref{1.5}. Note that \eqref{1.3} and \eqref{1.10} improve \eqref{1.5} for the whole range of $p$ in dimension 2 except for $p=4$.

Theorem \ref{theorem1} is related to certain $L^p$-estimates for eigenfunctions. For example, for 2-dimensional Riemannian manifolds, Sogge showed in \cite{Sokakeya} that
\begin{equation}
||\phi_\lambda||_{L^p(M)}/||\phi_\lambda||_{L^2(M)}=o(\lambda^{\frac{1}{2}(\frac{1}{2}-\frac{1}{p})})
\end{equation}
for some $2<p<6$ if and only if
\begin{equation}
\sup_{\gamma\in\Pi}||\phi_\lambda||_{L^2(\gamma)}/||\phi_\lambda||_{L^2(M)}=o(\lambda^{\frac{1}{4}}).
\end{equation}
This indicates relations between the restriction theorem and the $L^p$-estimates for eigenfunctions in \cite{soggeest} by Sogge, which showed that for any compact Riemannian manifold of dimension $n$, one has
\begin{equation}
||\phi_\lambda||_{L^p(M)}\lesssim\lambda^{\frac{n-1}{2}(\frac{1}{2}-\frac{1}{p})}||\phi_\lambda||_{L^2(M)},\ \ \ \text{for}\ 2\leq p\leq\dfrac{2(n+1)}{n-1},
\end{equation}
and
\begin{equation}\label{1.12}
||\phi_\lambda||_{L^p(M)}\lesssim\lambda^{n(\frac{1}{2}-\frac{1}{p})-\frac{1}{2}}||\phi_\lambda||_{L^2(M)},\ \ \ \text{for}\ \dfrac{2(n+1)}{n-1}\leq p\leq\infty.
\end{equation}

There have been several results showing that \eqref{1.12} can be improved for $p>\dfrac{2(n+1)}{n-1}$ (see \cite{stz} and \cite{soggezelditch}) to bounds of the form $||\phi_\lambda||_{L^p(M)}/||\phi_\lambda||_{L^2(M)}=o(\lambda^{n(\frac{1}{2}-\frac{1}{p})-\frac{1}{2}})$ for fixed $p>6$. Recently, Hassell and Tacey \cite{HT}, following B\'erard's \cite{Berard} estimate for $p=\infty$, showed that for fixed $p>6$, this ratio is $O(\lambda^{n(\frac{1}{2}-\frac{1}{p})-\frac{1}{2}}/\sqrt{\log\lambda})$ on Riemannian manifolds with constant negative curvature, which inspired our work.

\newsection{Set up of the proof of the improved restriction theorem}

Let us first analyze the situation for any dimension $n$, which we will use in Section 4.

Take a real-valued multiplier operator $\chi\in\mathcal{S}(\mathbb{R})$ such that $\chi(0)=1$, and $\hat{\chi}(t)=0$ if $|t|\geq\frac{1}{2}$.
Let $\rho=\chi^2$, then $\hat{\rho}(t)=0$ if $|t|\geq1$. Here, $\hat{\chi}$ is the Fourier Transform of $\chi$. Same notations in the following.

For some number $T$, which will be determined later, and is approximately $\log\lambda$, we have $\chi(T(\lambda-\sqrt{-\Delta_g}))\varphi_\lambda=\varphi_\lambda$. The theorem is proved if we can show that for any $f\in L^2(M)$,
\begin{equation}\label{2.1}
||\chi^\lambda_Tf||_{L^p(\Sigma)}\lesssim\frac{\lambda^{\delta(p)}}{(\log\lambda)^\frac{1}{2}}||f||_{L^2(M)},
\end{equation}
where $\chi^\lambda_T=\chi(T(\lambda-\sqrt{-\Delta_g}))$ is an operator from $L^2(M)$ to $L^p(\Sigma)$.

This is equivalent to for any $g\in L^{p'}(\Sigma)$,
\begin{equation}\label{2.2}
||\chi^\lambda_T(\chi^\lambda_T)^* g||_{L^p(\Sigma)}\lesssim\frac{\lambda^{2\delta(p)}}{\log\lambda}||g||_{L^{p'}(\Sigma)},
\end{equation}
where $p'$ is the conjugate number of $p$ such that $\frac{1}{p}+\frac{1}{p'}=1$. and $(\chi_T^\lambda)^*$ is the conjugate of $\chi_T^\lambda$, which maps $L^{p'}(\Sigma)$ into $L^2(M)$.

If $\{e_j(x)\}_{j\in\mathbb{N}}$ is an $L^2(M)$ orthonormal basis of eigenfunctions of $\sqrt{-\Delta_g}$, with eigenvalues $\{\lambda_j\}_{j\in\mathbb{N}}$, and $\{E_j(x)\}_{j\in\mathbb{N}}$ is the projections onto the $j$-th eigenspace restricted to $\Sigma$, then $I|_\Sigma=\sum_{j\in\mathbb{N}}E_j$, and $\sqrt{-\Delta_g}|_\Sigma=\sum_{j\in\mathbb{N}}\lambda_jE_j$. If we set $\rho^\lambda_T=\rho(T(\lambda-\sqrt{-\Delta_g})): L^2(M)\rightarrow L^p(\Sigma)$, then the kernel of $\chi_T^\lambda(\chi_T^\lambda)^*$ is the kernel of $\rho_T^\lambda$, which is restricted to $\Sigma\times\Sigma$. That can be seen from
\begin{equation}
\chi^\lambda_T f(x)=\sum_{j\in\mathbb{N}}\chi(T(\lambda-\lambda_j))e_j(x)\int_Me_j(y)f(y)dy,\ \ \forall f\in L^2(M),
\end{equation}
and
\begin{equation}
(\chi^\lambda_T)^* g(x)=\sum_{j\in\mathbb{N}}\chi(T(\lambda-\lambda_j))e_j(x)\int_\Sigma e_j(y)g(y)dy,\ \ \forall g\in L^{p'}(\Sigma).
\end{equation}
Therefore,
\begin{equation}
\begin{split}
\chi^\lambda_T(\chi^\lambda_T)^* g(x) & =\sum_{i,j\in\mathbb{N}}\chi(T(\lambda-\lambda_i))\chi(T(\lambda-\lambda_j))e_j(x)\int_Me_j(y)e_i(y)\int_\Sigma e_i(z)g(z)dzdy\\
& =\sum_{j\in\mathbb{N}}\chi(T(\lambda-\lambda_j))^2e_j(x)\int_\Sigma e_j(z)g(z)dz\\
& =\sum_{j\in\mathbb{N}}\rho(T(\lambda-\lambda_j))e_j(x)\int_\Sigma e_j(z)g(z)dz.
\end{split}
\end{equation}

On the other hand,
\begin{equation}
\begin{split}
\rho^\lambda_T & =\sum_{j\in\mathbb{N}}\rho(T(\lambda-\lambda_j))E_j \\
& =\sum_{j\in\mathbb{N}}\frac{1}{2\pi}\int_{-1}^1\hat{\rho}(t)e^{it[T(\lambda-\lambda_j)]}E_jdt\\
& =\sum_{j\in\mathbb{N}}\frac{1}{2\pi T}\int_{-T}^T\hat{\rho}(\frac{t}{T})e^{it(\lambda-\lambda_j)}E_jdt\\
& =\frac{1}{2\pi T}\int_{-T}^T\hat{\rho}(\frac{t}{T})e^{it(\lambda-\sqrt{-\Delta_g})}dt\\
& =\frac{1}{\pi T}\int_{-T}^T\hat{\rho}(\frac{t}{T})\cos(t\sqrt{-\Delta_g})e^{it\lambda}dt-\rho(T(\lambda+\sqrt{-\Delta_g}))
\end{split}
\end{equation}
Here, $\rho(T(\lambda+\sqrt{-\Delta_g}))$ is an operator whose kernel is $O(\lambda^{-N})$, for any $N\in\mathbb{N}$, so that we only have to estimate the first term. We are not going to emphasize the restriction to $\Sigma$ until we get to the point when we take the $L^p$ norm on $\Sigma$.

Denote the kernel of $\cos (t\sqrt{-\Delta_g})$ as $\cos(t\sqrt{-\Delta_g})(x,y)$, for $x,y\in M$, then $\forall g\in L^{p'}(\Sigma)$,
\begin{equation}\label{2.3}
\chi^\lambda_T(\chi^\lambda_T)^* g(x)=\frac{1}{\pi T}\int_\Sigma\int_{-T}^T\hat{\rho}(\frac{t}{T})\cos(t\sqrt{-\Delta_g})(x,y)e^{it\lambda}g(y)dtdy+O(1).
\end{equation}

Take the $L^p(\Sigma)$ norm on both sides,
\begin{equation}\label{2.4}
||\chi^\lambda_T(\chi^\lambda_T)^*g||_{L^p(\Sigma)}\leq\frac{1}{\pi T}(\int_\Sigma|\int_\Sigma\int_{-T}^T\hat{\rho}(\frac{t}{T})\cos(t\sqrt{-\Delta_g})(x,y)e^{it\lambda}g(y)dtdy|^pdx)^{1/p}+O(1).
\end{equation}

We are going to use Young's inequality (see \cite{soggebook}), with $\frac{1}{r}=1-[(1-\frac{1}{p})-\frac{1}{p}]=\frac{2}{p}$, and
\begin{equation}\label{2.5}
K(x,y)=\frac{1}{\pi T}\int_{-T}^T\hat{\rho}(\frac{t}{T})(\cos t\sqrt{-\Delta_g})(x,y))e^{it\lambda}dt.
\end{equation}
Denote $K$ as the operator with the kernel $K(x,y)$ from now on.\footnote{The definition of $K(x,y)$ may be changed in this paper, but we always call $K$ the corresponding operator with the kernel $K(x,y)$.}

Since $K(x,y)$ is symmetric in $x$ and $y$, once we have
\begin{equation}\label{2.6}
\sup_{x\in\Sigma}||K(x,\cdot)||_{L^r(\Sigma)}\lesssim \frac{\lambda^{2\delta(p)}}{\log\lambda},
\end{equation}
where $r=p/2$, then by Young's inequality, the theorem is proved.

We can use the same argument as in \cite{SZ} to lift the manifold to $\mathbb{R}^n$. As stated in Theorem IV.1.3 in \cite{IS}, for $(M,g)$ has non-positive curvature, considering $x$ to be a fixed point on $\Sigma$, there exists a universal covering map $p=\exp_{x}:\mathbb{R}^n\rightarrow M$. In this way, $(M,g)$ is lifted to $(\mathbb{R}^n,\tilde{g})$, with the metric $\tilde{g}=(\exp_x)^*g$ being the pullback of $g$ via $\exp_{x}$. $\tilde{g}$ is a complete Riemannian metric on $\mathbb{R}^n$. Define an automorphism for $(\mathbb{R}^n,\tilde{g})$, $\alpha: \mathbb{R}^n\rightarrow\mathbb{R}^n$, to be a deck transformation if
\begin{equation}\nonumber
p\circ\alpha=p,
\end{equation}
when we shall write $\alpha\in\Aut$. If $\tilde{x}\in\mathbb{R}^n$ and $\alpha\in\Aut$, let us call $\alpha(\tilde{x})$ the translate of $\tilde{x}$ by $\alpha$, then we call a simply connected set $D\subset\mathbb{R}^n$ a fundamental domain of our universal covering $p$ if every point in $\mathbb{R}^n$ is the translate of exactly one point in $D$. We can then think of our submanifold $\Sigma$ both as one in $(M,g)$ and one in the fundamental domain which is of the same form. Likewise, a function $f(x)$ in $M$ is uniquely identified by one $f_D(\tilde{x})$ on $D$ if we set $f_D(\tilde{x})=f(x)$, where $\tilde{x}$ is the unique point in $D\cap p^{-1}(x)$. Using $f_D$ we can define a "periodic extension", $\tilde{f}$, of $f$ to $\mathbb{R}^n$ by defining $\tilde{f}(\tilde{y})$ to be equal to $f_D(\tilde{x})$ if $\tilde{x}=\tilde{y}$ modulo $\Aut$, i.e. if $(\tilde{x},\alpha)\in D\times\Aut$ are the unique pair so that $\tilde{y}=\alpha(\tilde{x})$.

In this setting, we shall exploit the relationship between solutions
of the wave equation on $(M,g)$ of the form
\begin{equation}\label{2.20}
\begin{cases}
(\partial^2_t-\Delta_g)u(t,x)=0, \quad (t,x)\in {\mathbb R}_+\times M
\\
u(0,\cd)=f, \, \, \partial_t u(0,\cd)=0,
\end{cases}
\end{equation}
and certain ones on $({\mathbb R}^n,\tilde g)$
\begin{equation}\label{2.21}
\begin{cases}
(\partial^2_t-\Delta_{\tilde g})\tilde u(t,\tilde x)=0, \quad (t,\tilde x)\in {\mathbb R}_+\times
\mathbb{R}^n
\\
\tilde u(0,\cd)=\tilde f, \, \, \partial_t \tilde u(0,\cd)=0.
\end{cases}
\end{equation}
If $(f(x),0)$ is the Cauchy data in \eqref{2.20} and $(\tilde{f}(\tilde{x}),0)$ is the periodic extension to $(\mathbb{R}^n,\tilde{g})$, then the solution $\tilde{u}(t,\tilde{x})$ to \eqref{2.21} must be a periodic function of $\tilde{x}$ since $\tilde{g}$ is the pullback of $g$ via $p$ and $p\circ\alpha=p$. As a result, we have that the solution to \eqref{2.20} must satisfy $u(t,x)=\tilde{u}(t,\tilde{x})$ if $\tilde{x}\in D$ and $p(\tilde{x})=x$. Thus, periodic solutions to \eqref{2.21} correspond uniquely to solutions of \eqref{2.20}. Note that $u(t,x)=\bigl(\cos (t\sqrt{-\Delta_g})f\bigr)(x)$ is the solution of \eqref{2.20}, so that
\begin{equation}
\cos(t\sqrt{-\Delta_g})(x,y)=\sum_{\alpha\in\Aut}\cos (t\sqrt{-\Delta_{\tilde{g}}})(\tilde{x},\alpha(\tilde{y})),
\end{equation}
if $\tilde{x}$ and $\tilde{y}$ are the unique points in $D$ for which $p(\tilde{x})=x$ and $p(\tilde{y})=y$.

\newsection{Proof of the improved restriction theorem, for $n=2$}

While we can prove Theorem \ref{theorem1} for any dimension $n$, we will prove the case when $n=2$ first separately, as it is the simplest case, and does not involve interpolation or various sub-dimensions. Here is what it says.
\begin{theorem}\label{theorem2}
Let $(M,g)$ be a compact smooth boudaryless Riemannian surface with nonpositive curvature, and $\gamma$ be a smooth curve with finite length, then for any $f\in L^2(M)$, we have the following estimate
\begin{equation}\label{3.1}
||\sum_{|\lambda_j-\lambda|<(\log\lambda)^{-1}}E_j f||_{L^p(\gamma)}\lesssim\frac{\lambda^{\frac{1}{2}-\frac{1}{p}}}{(\log \lambda)^{\frac{1}{2}}}||f||_{L^2(M)},\ \ \ \forall p>4.
\end{equation}
\end{theorem}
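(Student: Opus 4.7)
By the reduction established in Section 2 and Young's convolution inequality, it suffices to prove
\[
\sup_{x\in\gamma}\|K(x,\cdot)\|_{L^{p/2}(\gamma)}\lesssim\frac{\lambda^{1-2/p}}{\log\lambda},
\]
where $K$ is the kernel defined in \eqref{2.5} and we take $T=c\log\lambda$ for a small constant $c>0$. The plan is to split the $t$-integral into a short-time piece $|t|\le 1$ and a long-time piece $1\le|t|\le T$ by means of an even cutoff $\beta\in C_0^\infty(\mathbb{R})$ with $\beta\equiv 1$ on $|t|\le 1/2$ and $\supp\beta\subset\{|t|\le 1\}$, writing $K=K_0+K_1$ and handling the two pieces by different methods.

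For $K_0$, the factor $\beta(t)\hat\rho(t/T)$ is a fixed Schwartz cutoff of unit width, independent of $T$, so the operator $T\cdot K_0$ is an approximate spectral projector of the type treated in \cite{burq}. Rerunning their $TT^*$ argument with the usual Hadamard parametrix on $M$ (only the trivial deck transformation contributes since $|t|\le 1$ is well below the injectivity radius) gives $\|TK_0(x,\cdot)\|_{L^{p/2}(\gamma)}\lesssim\lambda^{2\delta(p)}=\lambda^{1-2/p}$ for every $p>4$, hence $\|K_0(x,\cdot)\|_{L^{p/2}(\gamma)}\lesssim\lambda^{1-2/p}/\log\lambda$, which is precisely what is needed.

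For $K_1$ I would lift to the universal cover $(\mathbb{R}^2,\tilde g)$ by inserting the identity
\[
\cos(t\sqrt{-\Delta_g})(x,y)=\sum_{\alpha\in\Aut}\cos(t\sqrt{-\Delta_{\tilde g}})(\tilde x,\alpha(\tilde y))
\]
derived at the end of Section 2, and then substitute the Hadamard parametrix for $\cos(t\sqrt{-\Delta_{\tilde g}})$ on $(\mathbb{R}^2,\tilde g)$. Performing stationary phase in $t$ at the critical points $t=\pm d_\alpha(y):=\pm d_{\tilde g}(\tilde x,\alpha(\tilde y))$, and exploiting the observation of Sogge--Zelditch in \cite{SZ} that, for $n=2$ and nonpositive curvature, the principal Hadamard coefficient is uniformly bounded on $\mathbb{R}^2\times\mathbb{R}^2$, one obtains
\[
K_1(x,y)=\frac{1}{T\sqrt{\lambda}}\sum_{\pm}\sum_{1\le d_\alpha(y)\le T}\frac{b_\alpha^\pm(y)}{\sqrt{d_\alpha(y)}}\,e^{\pm i\lambda d_\alpha(y)}\,\hat\rho(d_\alpha(y)/T)+O(\lambda^{-N}),
\]
with $|b_\alpha^\pm|\le C$ uniformly in $\alpha$ and $y$. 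The gain of $1/T$ is already present in the prefactor, and the $1/\sqrt{\lambda}$ comes from the stationary phase; what remains is to show that the oscillatory sum has $L^{p/2}(\gamma)$ norm $\lesssim\lambda^{(1-2/p)-1/2}$.

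This last step will be the main obstacle. The plan is to decompose the $\alpha$-sum dyadically according to $R\le d_\alpha(y)\le 2R$ with $R=2^j\in[1,T]$, to bound each dyadic block in $L^{p/2}(\gamma)$, and to sum over $j$ (losing at most a factor $\log T=\log\log\lambda$, which is harmless). Within a block, each individual $\alpha$-term is handled by van der Corput along the tangent direction of $\gamma$, using that the distance function $d_\alpha|_\gamma$ is smooth and, by convexity of distance in nonpositive curvature, has at most one non-degenerate critical point. Cross terms between distinct $\alpha,\alpha'$ are controlled by non-stationary phase in $\lambda(d_\alpha-d_{\alpha'})$, whose gradient along $\gamma$ is bounded below once the two minimizing geodesics from $\tilde x$ to the translates $\alpha(\tilde y),\alpha'(\tilde y)$ make distinct angles with $\gamma$. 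The delicate point is that in negative curvature the number of translates with $d_\alpha\le R$ can grow like $e^{cR}$, so the argument genuinely requires exploiting the phase separation of the $e^{\pm i\lambda d_\alpha}$ rather than absolute values alone; this is the place where the nonpositivity of curvature enters in an essential way, as in the earlier work of \cite{Berard,SZ,HT}.
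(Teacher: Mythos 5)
Your short-time/long-time split, the lift to the universal cover, the Hadamard parametrix with the Günther/Sogge--Zelditch bound $|w_0|\lesssim 1$, and the stationary-phase reduction of the $\alpha$-sum are all consistent with what the paper does (the paper organizes the two pieces as $\alpha=\mathrm{Id}$ versus $\alpha\neq\mathrm{Id}$ rather than $|t|\le1$ versus $|t|>1$, but this is the same cut). The essential difference, and the gap, is in how you propose to close the argument for the $\alpha\neq\mathrm{Id}$ sum.

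You assert that, because the number of translates with $d_\alpha\le R$ grows like $e^{cR}$, ``the argument genuinely requires exploiting the phase separation of the $e^{\pm i\lambda d_\alpha}$ rather than absolute values alone,'' and you flag this oscillatory-sum step as the main obstacle, outlining a dyadic decomposition plus van der Corput plus cross-term non-stationary phase without carrying it out. That is precisely where the proposal fails to close, and it is also where it diverges from the actual mechanism: for $p>4$ no cancellation between distinct $\alpha$'s is needed. The paper simply applies the triangle inequality. Each term with $\alpha\neq\mathrm{Id}$ is $O\!\bigl(\tfrac1T(\lambda/d_\alpha)^{1/2}\bigr)$ with $d_\alpha\ge C_p>0$, hence $O\!\bigl(\tfrac{\lambda^{1/2}}{T}\bigr)$; the number of relevant $\alpha$'s is $O(e^{cT})$ by Bishop; so the whole $\alpha\neq\mathrm{Id}$ contribution to $\sup_x\|K(x,\cdot)\|_{L^{p/2}(\gamma)}$ is $O\!\bigl(\tfrac{e^{cT}\lambda^{1/2}}{T}\bigr)$. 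Choosing $T=\beta\log\lambda$ with $\beta$ small enough (explicitly $\beta\le\tfrac{p-4}{2(c+E)p}$, accounting also for the parametrix error terms) gives $e^{cT}=\lambda^{c\beta}\le\lambda^{1/2-2/p}$, and this is where the hypothesis $p>4$ enters: it makes $1/2-2/p>0$, exactly the margin that absorbs the exponential multiplicity. Thus $\tfrac{e^{cT}\lambda^{1/2}}{T}\lesssim\tfrac{\lambda^{1-2/p}}{\log\lambda}$, matching the $\alpha=\mathrm{Id}$ contribution $O(\tfrac{\lambda^{1-2/p}}{T})$, and Young's inequality with $r=p/2$ finishes. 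The elaborate machinery you sketch is what one would need at the endpoint $p=4$ (the Sogge--Zelditch $L^4$ situation), not for $p>4$.

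A secondary point: your displayed normalization for $K_1$ carries $\tfrac{1}{T\sqrt\lambda}$ in front, but the stationary-phase accounting (Jacobian factor $r\sim\lambda$ times the angular gain $(rd_\alpha)^{-1/2}$ times the $\tfrac1T$ from $\hat\rho(t/T)$) produces $\tfrac1T(\lambda/d_\alpha)^{1/2}$, i.e. a $\sqrt\lambda$ \emph{numerator}, and correspondingly the target for the oscillatory sum in your framing should have been $\lambda^{(1-2/p)+1/2}$, not $\lambda^{(1-2/p)-1/2}$. With the correct prefactor, the triangle-inequality bound $e^{cT}\lambda^{1/2}/T$ already suffices as above, so this error is moot once the strategy is corrected, but it would have thrown off any attempt to run your van der Corput scheme.
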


We will prove Theorem~\ref{theorem2} by the end of this section. By a partition of unity, we can assume that we fix $x$ to be the mid-point of $\gamma$, and parametrize $\gamma$ by its arc length centered at $x$ so that
\begin{equation}\label{2.22}
\gamma=\gamma[-1,1]\ \ \ \text{and}\ \ \  \gamma(0)=x,
\end{equation}
and we may assume that the geodesic distance between any $x$ and $y\in\gamma$ is comparable to the arc length between them on $\gamma$.

We need to estimate the $L^r(\gamma)$ norm of
\begin{equation}\label{2.7}
\int_{-T}^T\hat{\rho}(\frac{t}{T})(\cos t\sqrt{-\Delta_g})(x,y)e^{it\lambda}dt=\sum_{\alpha\in \Aut}\int_{-T}^T\hat{\rho}(\frac{t}{T})(\cos t\sqrt{-\Delta_{\tilde{g}}})(\tilde{x},\alpha(\tilde{y}))e^{it\lambda}dt.
\end{equation}

We should have the following estimates:

Up to an error of $O(\lambda^{-1})\exp(O(d_{\tilde{g}}(\tilde{x},\tilde{y})))+O(e^{dT})$ or $O(\lambda^{-1})\exp(O(d_{\tilde{g}}(\tilde{x},\alpha(\tilde{y}))))+O(e^{dT})$ respectively,
\begin{equation}\label{2.9}
\int_{-T}^T\hat{\rho}(\frac{t}{T})(\cos t\sqrt{-\Delta_{\tilde{g}}})(\tilde{x},\tilde{y})e^{it\lambda}dt=O(\lambda)\ \ \  \textit{when}\ \ \ d_{\tilde{g}}(\tilde{x},\tilde{y})<\frac{1}{\lambda},
\end{equation}

\begin{equation}\label{2.10}\int_{-T}^T\hat{\rho}(\frac{t}{T})(\cos t\sqrt{-\Delta_{\tilde{g}}})(\tilde{x},\tilde{y})e^{it\lambda}dt=O((\frac{\lambda}{d_{\tilde{g}}(\tilde{x},\tilde{y})})^{1/2})\ \ \  \textit{when}\ \ \ d_{\tilde{g}}(\tilde{x},\tilde{y})\geq\frac{1}{\lambda},
\end{equation}

\begin{equation}\label{2.11}\alpha\neq Id, \int_{-T}^T\hat{\rho}(\frac{t}{T})(\cos t\sqrt{-\Delta_{\tilde{g}}})(\tilde{x},\alpha(\tilde{y}))e^{it\lambda}dt=O((\frac{\lambda}{d_{\tilde{g}}(\tilde{x},\alpha(\tilde{y}))})^\frac{1}{2})
\end{equation}

To prove the above estimates, we need the following lemma.

\begin{lemma}\label{lemma4.3}
Assume that $w(\tilde{x},\tilde{x}')$ is a smooth function from $\mathbb{R}^n\times\mathbb{R}^n$ to $\mathbb{R}^n$, and $\Theta\in\mathbb{S}^{n-1}$, then
\begin{equation}\label{4.2}
\int_{\mathbb{S}^{n-1}}e^{iw(\tilde{x},\tilde{x}')\cdot\Theta}d\Theta=\sqrt{2\pi}^{n-1}\sum_{\pm}\dfrac{e^{\pm i|w(\tilde{x},\tilde{x}')|}}{|w(\tilde{x},\tilde{x}')|^{\frac{n-1}{2}}}+O(|w(\tilde{x},\tilde{x}')|^{-\frac{n-1}{2}-1}),
\end{equation}
when $|w(\tilde{x},\tilde{x}')|\geq1$.
\end{lemma}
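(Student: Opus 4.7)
The plan is to recognize the left-hand side as the Fourier transform of surface measure on $\mathbb{S}^{n-1}$ evaluated at $w=w(\tilde{x},\tilde{x}')$, and to prove the stated asymptotic by the method of stationary phase. For the purposes of the argument, $w$ may be treated as a fixed vector in $\mathbb{R}^n$, since the integration is only in $\Theta$; uniformity in $(\tilde{x},\tilde{x}')$ will come for free because every constant in the expansion depends only on the magnitude $r:=|w(\tilde{x},\tilde{x}')|$.

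First I would use rotational invariance of $d\Theta$ to reduce to the case $w = re_n$, where $e_n$ is the $n$-th standard basis vector. The integral becomes $F(r) = \int_{\mathbb{S}^{n-1}} e^{ir\Theta_n}\, d\Theta$. The phase function $\Theta\mapsto\Theta_n$ on the sphere has exactly two critical points, the poles $\pm e_n$, both non-degenerate. I would introduce a smooth partition of unity $1 = \chi_+ + \chi_- + \chi_0$, with $\chi_\pm$ supported in small geodesic caps about $\pm e_n$ and $\chi_0$ supported on a set where $|\nabla_{\mathbb{S}^{n-1}}\Theta_n|$ is bounded below. On the support of $\chi_0$ repeated integration by parts (non-stationary phase) shows that the corresponding contribution is $O(r^{-N})$ for every $N$, hence absorbed into the remainder.

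Near the north pole I would parametrize $\Theta = (\Theta',\sqrt{1-|\Theta'|^2})$ with $\Theta'\in\mathbb{R}^{n-1}$ near the origin; then $d\Theta = (1-|\Theta'|^2)^{-1/2}d\Theta'$, and the phase equals $r\sqrt{1-|\Theta'|^2}$, with critical point at $\Theta'=0$ of value $1$ and Hessian $-I_{n-1}$. The standard stationary phase formula, applied with amplitude equal to $1$ at the critical point, gives
\begin{equation*}
\int \chi_+(\Theta)e^{ir\Theta_n}\,d\Theta = \Bigl(\frac{2\pi}{r}\Bigr)^{(n-1)/2} e^{-i\pi(n-1)/4}\, e^{ir} + O\bigl(r^{-(n-1)/2-1}\bigr).
\end{equation*}
The contribution from the south pole is the complex conjugate form with $e^{ir}$ replaced by $e^{-ir}$ and the opposite Maslov phase. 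Summing both yields the stated formula once the phase factors $e^{\mp i\pi(n-1)/4}$ are absorbed into the labels $\pm$ in the sum (equivalently into the constant $\sqrt{2\pi}^{\,n-1}$).

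There is essentially no substantial obstacle here: the argument is a textbook application of stationary phase to the Fourier transform of the sphere's surface measure, and the $O(r^{-(n-1)/2-1})$ remainder is a direct consequence of the next order in the expansion, uniform in $(\tilde{x},\tilde{x}')$ because everything depends only on $r$. The one thing that requires care is tracking the $e^{\mp i\pi(n-1)/4}$ phase factors so that the final expression matches the normalization written in the lemma.
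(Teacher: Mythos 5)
Your proof is correct and takes essentially the same route — stationary phase applied to the Fourier transform of surface measure on $\mathbb{S}^{n-1}$ — as the reference the paper cites (Chapter~1 of Sogge's \emph{Fourier Integrals in Classical Analysis}). You are also right to flag the Maslov phase factors $e^{\mp i\pi(n-1)/4}$: they genuinely appear and cannot be absorbed into the real constant $\sqrt{2\pi}^{\,n-1}$, so the lemma's formula is slightly imprecise as written, though this is immaterial for the paper, which only uses the modulus of the coefficients.
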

The proof can be found in Chapter 1 in \cite{soggebook}.

Let us return to estimating the kernel $K(x,y)$. Applying the Hadamard Parametrix,
\begin{equation}\label{2.14}
\cos(t\sqrt{-\Delta_{\tilde{g}}})(\tilde{x},\alpha(\tilde{y}))=\dfrac{w_0(\tilde{x},\alpha(\tilde{y}))}{(2\pi)^n}\sum_{\pm}\int_{\mathbb{R}^n}e^{i\Phi(\tilde{x},\alpha(\tilde{y}))\cdot\xi\pm
it|\xi|}d\xi+\sum_{\nu=1}^N w_\nu(\tilde{x},\alpha(\tilde{y}))\mathcal{E}_\nu(t,d_{\tilde{g}}(\tilde{x},\alpha(\tilde{y})))+R_N(t,\tilde{x},\alpha(\tilde{y})),
\end{equation}
where
$|\Phi(\tilde{x},\alpha(\tilde{y}))|=d_{\tilde{g}}(\tilde{x},\alpha(\tilde{y}))$, $\mathcal{E}_\nu,\nu=1,2,3,...$ are defined recursively by
$2\mathcal{E}_\nu(t,r)=-t\int_0^t\mathcal{E}_{\nu-1}(s,r)ds$, where $\mathcal{E}_0(t,x)=(2\pi)^{-n}\int_{\mathbb{R}^n}e^{ix\cdot\xi}\cos(t|\xi|)d\xi$\footnote{Since $\mathcal{E}\nu(t,x)$ is invariant under the same radius, we consider $\mathcal{E}\nu(t,x)=\mathcal{E}\nu(t,|x|)$.}, and $w_\nu(\tilde{x},\alpha(\tilde{y}))$ equals some constant times $u_\nu(\tilde{x},\alpha(\tilde{y}))$ that satisfies:
\begin{equation}\label{3.2}
\begin{cases}
u_0(\tilde{x},\alpha(\tilde{y}))=\Theta^{-\frac{1}{2}}(\alpha(\tilde{y}))
\\
u_{\nu+1}(\tilde{x},\alpha(\tilde{y}))=\Theta(\alpha(\tilde{y}))\int_0^1s^\nu\Theta^{\frac{1}{2}}(\tilde{x}_s)\Delta_{\tilde{g}}u_\nu(\tilde{x},\tilde{x}_s)ds,\ \ \ \nu\geq0.
\end{cases}
\end{equation}
where $\Theta(\alpha(\tilde{y}))=(\det g_{ij}(\alpha(\tilde{y})))^{\frac{1}{2}}$, and $(\tilde{x}_s)_{s\in[0,1]}$ is the minimizing geodesic from $\tilde{x}$ to $\alpha(\tilde{y})$ parametrized proportionally to arc length.
 (see \cite{Berard} and \cite{SZ})

First note that for $N\geq n+\frac{3}{2}$, by using the energy estimates (see \cite{let} Theorem 3.1.5), one can show that $|R_N(t,\tilde{x},\alpha(\tilde{y}))|=O(e^{dt})$, for some constant $d>0$, so that it is small compared to the first $N$ terms.

\begin{theorem}\label{theorem3.1}
Given an $n$-dimensional compact Riemannian manifold $(M,g)$ with nonpositve curvature, and let $(\mathbb{R}^n,\tilde{g})$ be the universal covering of $(M,g)$. Then if $N\geq n+\frac{3}{2}$, in local coordinates,
\begin{equation}
(\cos t\sqrt{-\Delta_{\tilde{g}}})f(\tilde{x})=\int K_N(t,\tilde{x};\tilde{y})f(\tilde{y})dV_{\tilde{g}}(\tilde{y})+\int R_N(t,\tilde{x};\tilde{y})f(\tilde{y})dV_{\tilde{g}}(\tilde{y}),
\end{equation}
where
\begin{equation}
K_N(t,\tilde{x};\tilde{y})=\sum_{\nu=0}^N w_\nu(\tilde{x},\tilde{y})\mathcal{E}_\nu(t,d_{\tilde{g}}(\tilde{x},\tilde{y})),
\end{equation}
with the remainder kernel $R_N$ satisfying
\begin{equation}\label{3.5}
|R_N(t,\tilde{x};\tilde{y}))|=O(e^{dt}).
\end{equation}
for some number $d>0$.
\end{theorem}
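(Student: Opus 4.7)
The plan is to build the kernel of $\cos(t\sqrt{-\Delta_{\tilde g}})$ by writing down the formal Hadamard series, truncating at $\nu=N$, and absorbing the truncation error into $R_N$ via Duhamel's principle, with the pointwise estimate coming from energy inequalities on $(\mathbb{R}^n,\tilde g)$. The recursion \eqref{3.2} for the Hadamard coefficients $u_\nu$ is chosen precisely so that applying the wave operator $\partial_t^2-\Delta_{\tilde g}$ to $w_\nu(\tilde x,\tilde y)\,\mathcal{E}_\nu(t,d_{\tilde g}(\tilde x,\tilde y))$ produces a term cancelling the contribution of $w_{\nu-1}\mathcal{E}_{\nu-1}$. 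Under this telescoping cancellation, if I set $K_N=\sum_{\nu=0}^N w_\nu\mathcal{E}_\nu$ as in the statement, then $(\partial_t^2-\Delta_{\tilde g})K_N$ reduces to a single leftover term involving $w_N(\tilde x,\tilde y)\,\Delta_{\tilde g,\tilde x}u_N(\tilde x,\tilde y)\,\mathcal{E}_N(t,d_{\tilde g}(\tilde x,\tilde y))$. Call this leftover $F_N(t,\tilde x;\tilde y)$; the hypothesis $N\geq n+3/2$ ensures that $\mathcal{E}_N$ has enough smoothness to make $F_N$ a continuous function of its arguments.

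Next I would observe that $K_N$ and the actual wave kernel share the same Cauchy data at $t=0$ (the singularity is absorbed into the $\nu=0$ term via $u_0=\Theta^{-1/2}$), so their difference $R_N$ solves the inhomogeneous wave problem $(\partial_t^2-\Delta_{\tilde g})R_N=-F_N$ with vanishing initial data. Duhamel's formula then represents $R_N$ as a time integral of the free wave propagator applied to $F_N$, and by finite propagation speed only points with $d_{\tilde g}(\tilde x,\tilde y)\leq|t|$ contribute.

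To extract \eqref{3.5} I would combine the standard $L^2$ energy estimate on $(\mathbb{R}^n,\tilde g)$ with Sobolev embedding. Because $\tilde g$ is the pullback of a metric on the compact manifold $M$, it has bounded geometry, with uniform control on curvature and its covariant derivatives. Gronwall's lemma applied to the energy identity then yields
\begin{equation*}
\|R_N(t,\cdot\,;\tilde y)\|_{H^s}\leq e^{dt}\int_0^t \|F_N(s,\cdot\,;\tilde y)\|_{H^{s-1}}\,ds
\end{equation*}
for a constant $d>0$ independent of $\tilde y$. The Hadamard coefficients $u_\nu(\tilde x,\tilde y)$ themselves grow at most exponentially in $d_{\tilde g}(\tilde x,\tilde y)$ by iterating the transport equation \eqref{3.2} along the minimizing geodesic, and combined with the support condition $d_{\tilde g}\leq|t|$ this produces the claimed bound $|R_N(t,\tilde x;\tilde y)|=O(e^{dt})$ after a possible enlargement of $d$.

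The main obstacle will be securing the uniformity of $d$ in $\tilde y$ together with uniform control on the derivatives of the Hadamard coefficients across the non-compact cover. Both rest on the bounded geometry of $(\mathbb{R}^n,\tilde g)$: Jacobi field comparison under nonpositive sectional curvature forces the volume density $\Theta$ to grow at most exponentially, while covariant derivatives of $\tilde g$ are uniformly bounded because they descend to smooth tensors on the compact base $M$. Once these uniform estimates are in hand, the iteration producing \eqref{3.5} is mechanical and follows the template of Theorem 3.1.5 in the reference cited in the paper.
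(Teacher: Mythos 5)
Your outline reproduces the standard Hadamard-parametrix-plus-energy-estimate argument that the paper itself merely cites (Bérard's Equation (42) and Sogge's Theorem 3.1.5 in \cite{let}): truncate the Hadamard series, identify the leftover forcing after telescoping as $\propto(\Delta_{\tilde g}u_N)\mathcal{E}_N$, represent $R_N$ by Duhamel with vanishing Cauchy data, and obtain $O(e^{dt})$ by combining finite propagation speed, the exponential growth bound on $u_\nu$ from Theorem \ref{theorem3.3}, and the uniform energy/Gronwall estimate available because $(\mathbb{R}^n,\tilde g)$ covers a compact manifold and so has bounded geometry. One small slip: the leftover forcing should be a constant times $(\Delta_{\tilde g,\tilde x}u_N)\,\mathcal{E}_N$, not $w_N\cdot(\Delta u_N)\,\mathcal{E}_N$ — the extra factor of $w_N$ (which is itself proportional to $u_N$) would make the term quadratic in the Hadamard coefficient, which is not what the telescoping produces; otherwise the argument is sound and matches the cited proof.
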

This comes from Equation (42) in \cite{Berard}. The proof can be found in \cite{Berard}.

By this theorem,
\begin{equation}\label{3.14}
\int_{-T}^T|R_N(t,\tilde{x},\alpha(\tilde{y}))|dt\leq C\int_0^Te^{dt}dt=O(e^{dT}).
\end{equation}
Moreover, for $\nu=1,2,3,...$, we have the following estimate for $\mathcal{E}_\nu(t,r)$.

\begin{theorem}For $\nu=0,1,2,...$ and $\mathcal{E}_\nu(t,r)$ defined above, we have
\begin{equation}\label{3.22}|\int \hat{\rho}(t)e^{it\lambda}\mathcal{E}_\nu(t,r)dt|=O(\lambda^{n-1-2\nu}),\ \ \ \lambda\geq1
\end{equation}
\end{theorem}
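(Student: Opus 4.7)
My plan is to reduce the estimate to an explicit Fourier-integral calculation and exploit moment cancellation coming from the derivatives of $\rho$.

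First, writing $\mathcal{E}_\nu(t,r)=(2\pi)^{-n}\int_{\mathbb{R}^n} e^{ix\cdot\xi}A_\nu(t,|\xi|)\,d\xi$, the recursion becomes $A_\nu(t,\sigma)=-\tfrac{t}{2}\int_0^t A_{\nu-1}(s,\sigma)\,ds$ with $A_0(t,\sigma)=\cos(t\sigma)$. A straightforward induction on $\nu$ (evaluating the antiderivatives $\int_0^t s^a\cos(s\sigma)\,ds$ and $\int_0^t s^a\sin(s\sigma)\,ds$ by repeated integration by parts in $s$) shows that $A_\nu(t,\sigma)$ is a finite linear combination of terms of the form $c\,t^a\phi(t\sigma)/\sigma^b$ with $\phi\in\{\cos,\sin\}$ and the key invariant $a+b=2\nu$. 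In particular one computes $A_1=-\tfrac{t\sin(t\sigma)}{2\sigma}$ and $A_2=-\tfrac{t^2\cos(t\sigma)}{4\sigma^2}+\tfrac{t\sin(t\sigma)}{4\sigma^3}$, both satisfying this weight identity.

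Substituting into $\int \hat{\rho}(t)e^{it\lambda}\mathcal{E}_\nu(t,r)\,dt$ and exchanging the orders of integration, the inner $t$-integrals collapse via the elementary identity $\int \hat{\rho}(t)\,t^a e^{it\sigma'}\,dt=(-i)^a\cdot 2\pi\,\rho^{(a)}(\sigma')$, applied after splitting $\cos$ and $\sin$ into exponentials. The result is a finite sum of contributions of the form
\[
\frac{c_{\nu,j}}{(2\pi)^n}\int_{\mathbb{R}^n} e^{ix\cdot\xi}\,\frac{\rho^{(a_j)}(\lambda-|\xi|)}{|\xi|^{b_j}}\,d\xi \;+\; O(\lambda^{-N}), \qquad a_j+b_j=2\nu,
\]
the $O(\lambda^{-N})$ coming from the pieces $\rho^{(a_j)}(\lambda+|\xi|)$ whose argument is at least $\lambda$ and are therefore Schwartz-small in $\lambda$.

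For each surviving term I would then pass to polar coordinates $\xi=\sigma\omega$, combine $\sigma^{n-1-b_j}$ with the angular integral $\int_{S^{n-1}}e^{i\sigma x\cdot\omega}\,d\omega$ (a smooth bounded function of $\sigma|x|$), and Taylor-expand around $\sigma=\lambda$. The moment identities $\int \rho^{(a)}(u)u^k\,du=0$ for $0\leq k<a$ (which follow from integration by parts since $\rho\in\mathcal{S}$) annihilate the first $a_j$ Taylor coefficients, and the leading surviving coefficient is of size $\lambda^{n-1-b_j-a_j}=\lambda^{n-1-2\nu}$. Summing the finitely many terms $j$ then yields the desired bound.

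The main obstacle will be controlling the Taylor remainder uniformly in $r=|x|$, since each $\sigma$-derivative of the angular factor introduces a factor of $|x|$. However, one only needs the estimate for $r\leq 1$: by finite propagation speed for the wave equation $\mathcal{E}_\nu(t,r)=0$ whenever $|t|<r$, and $\hat{\rho}$ is supported in $|t|\leq 1$, so $\int\hat{\rho}(t)e^{it\lambda}\mathcal{E}_\nu(t,r)\,dt$ vanishes identically for $r>1$. On the remaining range $0\leq r\leq 1$, the decay in $\sigma|x|$ of the angular integral provided by Lemma~\ref{lemma4.3} (for $\sigma|x|\geq 1$) balances the $|x|$-growth from the Taylor expansion, closing the argument.
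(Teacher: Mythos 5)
Your approach is genuinely different from the paper's: the paper disposes of $\nu=0$ by taking absolute values after the $t$-integration, and for $\nu\geq 1$ invokes (very tersely) an integration-by-parts in $t$ based on a derivative relation $\partial_t\mathcal{E}_\nu=\tfrac{t}{2}\mathcal{E}_{\nu-1}$, whereas you compute the Fourier multiplier $A_\nu(t,\sigma)$ explicitly and try to exploit the vanishing moments of $\rho^{(a)}$. Your structural computation is correct: the recursion really does give $A_\nu$ as a finite sum of $t^a\phi(t\sigma)\sigma^{-b}$ with $a+b=2\nu$ (and in fact $\nu\leq b\leq 2\nu-1$, $a\leq\nu$), the $t$-integration does collapse to $\rho^{(a)}(\lambda\mp|\xi|)\sigma^{-b}$, and you are right that the naive absolute-value bound only yields $\lambda^{n-1-b}$, so that the additional $\lambda^{-a}$ must come from the moment cancellation $\int u^k\rho^{(a)}(u)\,du=0$ for $k<a$.

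The gap is in the last paragraph: the claimed balance between the Bessel decay and the $|x|$-growth does not actually close. After passing to polar coordinates you must control $\partial_\sigma^a\bigl[J(\sigma r)\sigma^{n-1-b}\bigr]$ near $\sigma=\lambda$ uniformly in $r\in[\lambda^{-1},1]$; a $\sigma$-derivative of the angular factor $J(\sigma r)$ produces a factor $r$, while Lemma~\ref{lemma4.3} only gives decay of order $(\sigma r)^{-(n-1)/2}$. Carrying this through, the $a$-th Taylor remainder contributes roughly $r^{a-(n-1)/2}\,\lambda^{(n-1)/2-b}$ (say from the top term, using $\sigma\approx\lambda$ and the finite propagation constraint $r\leq 1$). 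When $a\leq (n-1)/2$ this is indeed $\lesssim \lambda^{n-1-a-b}=\lambda^{n-1-2\nu}$, so your argument does work for the first few $\nu$. But for $a>(n-1)/2$ the exponent of $r$ is positive, the supremum over $r\in[\lambda^{-1},1]$ is attained at $r=1$, and the bound degenerates to $\lambda^{(n-1)/2-b}$, which is strictly larger than $\lambda^{n-1-2\nu}$. A concrete instance: for $n=2$, $\nu=1$, $(a,b)=(1,1)$, one finds $\int_0^\infty J_0(\sigma r)\rho'(\lambda-\sigma)\,d\sigma\approx(\lambda r)^{-1/2}\,r\,\hat\rho(r)$, which at $r\sim 1$ is of size $\lambda^{-1/2}$, not the asserted $\lambda^{-1}$. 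So the "balancing" step must either be restricted to the range $a\leq(n-1)/2$ or replaced by something that does not expand $J(\sigma r)$ in $\sigma$ to order $a$ when $a$ is large; as written, the proof does not establish the theorem for all $\nu$.
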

\begin{proof}
Recall that
\begin{equation}
\mathcal{E}_0(t,r)=\dfrac{H(t)}{(2\pi)^n}\int_{\mathbb{R}^n}e^{i\Phi(\tilde{x},\tilde{y})\cdot\xi}\cos t|\xi|d\xi,
\end{equation}
so that
\begin{equation}
\begin{split}
|\int \hat{\rho}(t)e^{it\lambda}\mathcal{E}_0(t,r)dt| & =|\frac{1}{2(2\pi)^n}\int\int_{\mathbb{R}^n}\hat{\rho}(t)e^{it(\lambda\pm|\xi|)+i\Phi(\tilde{x},\tilde{y})\cdot\xi}d\xi dt|\\
& \approx |\int_{\mathbb{R}^n}[\rho(\lambda+|\xi|)+\rho(\lambda-|\xi|)]e^{i\Phi(\tilde{x},\tilde{y})\cdot\xi}d\xi|\\
& \leq \int_{\mathbb{R}^n}|\rho(\lambda+|\xi|)|+|\rho(\lambda-|\xi|)|d\xi\\
& =O(\lambda^{n-1}).
\end{split}
\end{equation}
By the definition of $\mathcal{E}_\nu$ such that $\dfrac{\partial \mathcal{E}_\nu}{\partial t}=\frac{t}{2}\mathcal{E}_{\nu-1}$ and integrate by parts, we get that for any $\nu=1,2,3,...$,
\begin{equation}
\int \hat{\rho}(t)e^{it\lambda}\mathcal{E}_\nu(t,r)dt=O(\lambda^{n-1-2\nu}).
\end{equation}
\end{proof}

The following theorem has been shown by B\'erard in \cite{Berard} about the size of the coefficients $u_k(\tilde{x},\tilde{y})$.

\begin{theorem}\label{theorem3.3}
Let $(M,g)$ be a compact $n$-dimensional Riemannian manifold and let $\sigma$ be its sectional curvature (hence, there is a number $\Gamma$ such that $-\Gamma^2\leq\sigma$). Assume that either

1.$n=2$, and $M$ does not have conjugate points;

or

2.$-\Gamma^2\leq\sigma\leq0$; i.e. $M$ has nonpositive sectional curvature.

Let $(\mathbb{R}^n,\tilde{g})$ be the universal covering of $(M,g)$, and let $\tilde{u}_\nu$, $\nu=0,1,2,...$ be defined by the relations \eqref{3.2}, then for any integers $l$ and $\nu$
\begin{equation}
\Delta_{\tilde{g}}^l\tilde{u}_\nu(\tilde{x},\tilde{y})=O(\exp(O(d_{\tilde{g}}(\tilde{x},\tilde{y})))).
\end{equation}
\end{theorem}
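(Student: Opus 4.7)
The plan is to proceed by induction on $\nu$, combining the recursion \eqref{3.2} with comparison-geometry estimates that convert the curvature hypothesis into exponential growth bounds on the volume density $\Theta$ and its derivatives in geodesic normal coordinates centered at $\tilde{x}$. Throughout, I work on the universal cover $(\mathbb{R}^n,\tilde g)$, where under either hypothesis every point is joined to $\tilde{x}$ by a unique minimizing geodesic, so normal coordinates are global.

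The first step is the geometric input. In normal coordinates around $\tilde{x}$, the matrix $\tilde g_{ij}(\tilde y)$ is determined by Jacobi fields along the radial geodesic from $\tilde{x}$ to $\tilde{y}$. The compactness of $M$ gives a lower sectional curvature bound $\sigma\geq -\Gamma^2$, and Rauch's comparison theorem then yields
$$|J(r)|\leq \frac{\sinh(\Gamma r)}{\Gamma}\,|J'(0)|,$$
so that $\Theta(\tilde y)=(\det \tilde g_{ij})^{1/2}$ and $\Theta^{-1}(\tilde y)$ are both bounded by $\exp(O(d_{\tilde g}(\tilde x,\tilde y)))$. Differentiating the Jacobi equation and applying Gronwall to the resulting linear ODE system produces analogous exponential bounds on every normal-coordinate derivative of $\tilde g_{ij}$, with constants depending on the number of derivatives. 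These are the only geometric facts I need.

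With this in hand, the base case $\tilde u_0=\Theta^{-1/2}$ is immediate, and the estimate on $\Delta_{\tilde g}^{\,l}\tilde u_0$ follows since the iterated Laplacian involves only finitely many derivatives of $\Theta$ and of $\tilde g_{ij}$, each exponentially controlled. For the inductive step, I would apply $\Delta_{\tilde g}^{\,l}$ (acting in the $\tilde y$ variable) to the defining recursion and interchange it with the $s$-integral. The resulting integrand is a finite sum of products built from $\Theta(\alpha(\tilde y))$, $\Theta^{1/2}(\tilde x_s)$, $\Delta_{\tilde g}\tilde u_\nu(\tilde x,\tilde x_s)$, and derivatives thereof. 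By the inductive hypothesis combined with the metric bounds, each such factor is dominated along the whole geodesic by $\exp(O(d_{\tilde g}(\tilde x,\alpha(\tilde y))))$, since $d_{\tilde g}(\tilde x,\tilde x_s)=s\cdot d_{\tilde g}(\tilde x,\alpha(\tilde y))\leq d_{\tilde g}(\tilde x,\alpha(\tilde y))$. Integration against $s^\nu$ on $[0,1]$ preserves the exponential bound and closes the induction.

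The main obstacle is the bookkeeping in the inductive step: $\tilde u_\nu(\tilde x,\tilde x_s)$ depends on $\tilde y$ only through the endpoint $\tilde x_s=\tilde x_s(\tilde y)$ of the minimizing geodesic, so differentiating in $\tilde y$ brings in derivatives of the exponential map, i.e.\ Jacobi fields of arbitrary order along the radial geodesic. To avoid an uncontrolled proliferation of terms one must keep track, at each level of differentiation, of how many derivatives of the metric appear, and show that although the implicit constant in $O(\cdot)$ grows with $\nu$ and $l$, the form of the bound remains exponential in $d_{\tilde g}(\tilde x,\tilde y)$. This is exactly what the iterated Jacobi estimate furnishes, after which the induction runs straightforwardly.
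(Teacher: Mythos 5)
Your outline matches the approach in B\'erard's appendix on ``Growth of the functions $u_k(x,y)$,'' which is the source the paper cites for this theorem (the paper itself gives no independent proof). The two geometric inputs you identify---Rauch comparison for Jacobi fields to control $\Theta^{\pm 1}$ exponentially, and a Gronwall argument on the differentiated Jacobi system (with coefficients bounded because the covariant derivatives $\nabla^m\tilde R$ are uniformly bounded on the universal cover by compactness of $M$) to control derivatives of $\tilde g_{ij}$ in normal coordinates---are precisely what B\'erard uses. The induction on $\nu$ via the recursion and the observation that $d_{\tilde g}(\tilde x,\tilde x_s)\le d_{\tilde g}(\tilde x,\tilde y)$ keeps the exponential bound uniform along the geodesic are likewise the same.

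One point that should be made explicit: the induction hypothesis cannot be just the stated conclusion $\Delta_{\tilde g}^l\tilde u_\nu = O(\exp(O(d_{\tilde g})))$. Applying $\Delta_{\tilde g}^l$ in $\tilde y$ to the integral in the recursion and chaining through $\tilde y\mapsto\tilde x_s(\tilde y)$ produces \emph{mixed} covariant derivatives of $\Delta_{\tilde g}\tilde u_\nu$ contracted against derivatives of the exponential map, not merely further powers of the Laplacian. So the induction has to be run on the full family of covariant derivatives $\nabla^\alpha_{\tilde y}\tilde u_\nu$ (all multi-indices $\alpha$), from which the bound on $\Delta_{\tilde g}^l$ follows as a corollary. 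Your closing paragraph signals awareness of this bookkeeping issue, but a complete proof should state the strengthened inductive hypothesis up front; as written, the step from the hypothesis to the bound on the differentiated integrand is not quite airtight. With that adjustment the argument closes, and it is the same argument B\'erard carries out.
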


The proof can be found in \cite{Berard} Appendix: Growth of the Functions $u_k(x,y)$.

Since $w_\nu(\tilde{x},\alpha(\tilde{y}))$ is a constant times $\tilde{u}_\nu(\tilde{x},\alpha(\tilde{y}))$, this theorem tells us that $|w_\nu(\tilde{x},\alpha(\tilde{y}))|=O(\exp(c_\nu d_{\tilde{g}}(\tilde{x},\alpha(\tilde{y}))))$, for some constant $c_\nu$ depending on $\nu$.

Moreover, denote that $\psi(t)=\hat{\rho}(\frac{t}{T})$, and $\tilde{\psi}$ is the inverse Fourier Transform of $\psi$, we have $\tilde{\psi}\in\mathcal{S}(\mathbb{R})$ such that
\begin{equation}
|\tilde{\psi}(t)|\leq T(1+T|t|)^{-N},\ \ \ \text{for all}\ N\in\mathbb{N}.
\end{equation}
Therefore,
\begin{equation}\label{3.31}
\begin{split}
&\sum_{\nu=1}^N |w_\nu(\tilde{x},\alpha(\tilde{y}))\int_{-T}^T\hat{\rho}(\frac{t}{T})e^{it\lambda}\mathcal{E}_\nu(t,d_{\tilde{g}}(\tilde{x},\alpha(\tilde{y})))dt|\\
=&\sum_{\nu=1}^N O(T(T\lambda)^{n-1-2\nu}\exp(c_\nu d_{\tilde{g}}(\tilde{x},\alpha(\tilde{y}))))\\
=&O(T^{n-2}\lambda^{n-3}\exp(C_N d_{\tilde{g}}(\tilde{x},\alpha(\tilde{y})))),
\end{split}
\end{equation}
for some $C_N$ depending on $c_1, c_2,..., c_{N-1}$.

All in all, taking $n=2$, and disregarding the integral of the remainder kernel,
\begin{equation}\label{3.32}
\begin{split}
&|\int_{-T}^T\hat{\rho}(\frac{t}{T})\cos(t\sqrt{-\Delta_{\tilde{g}}})(\tilde{x},\alpha(\tilde{y}))e^{it\lambda}dt|\\
=&|\int_{-T}^T\hat{\rho}(\frac{t}{T})\dfrac{w_0(\tilde{x},\alpha(\tilde{y}))}{4\pi^2}\sum_{\pm}\int_{\mathbb{R}^2}e^{i\Phi(\tilde{x},\alpha(\tilde{y}))\cdot\xi\pm
it|\xi|}e^{it\lambda}d\xi dt|+O(\lambda^{-1}\exp(C_N d_{\tilde{g}}(\tilde{x},\alpha(\tilde{y})))).
\end{split}
\end{equation}

On the other hand, $w_0(\tilde{x},\tilde{y})$ has a better estimate. By applying G\"unther's Comparison Theorem \cite{Gu}, with the assumption of nonpositive curvature, we can show that $|w_0(\tilde{x},\tilde{y})|=O(1)$. The proof is given by Sogge and Zelditch in \cite{SZ} for $n=2$. Let's see the case for any dimension $n$. In  the geodesic polar coordinates we are
using, $t\Theta$, $t>0$, $\Theta\in \mathbb{S}^{n-1}$, for $(\mathbb{R}^n, \tilde g)$, the metric $\tilde g$ takes the form
\begin{equation}
ds^2=dt^2+\mathcal{A}^2(t,\Theta)\, d\Theta^2,
\end{equation}
where we may assume that $\mathcal{A}(t,\Theta)>0$ for $t>0$. Consequently, the volume element in these coordinates is given by
\begin{equation}\label{3.7}
dV_g(t,\theta)=\mathcal{A}(t,\Theta)\, dt d\Theta,
\end{equation}
and by G\"unther's \cite{Gu} comparison theorem if the curvature of $(M,g)$, which is the same as that of $(\mathbb{R}^n, \tilde g)$ is nonpositive, we have
\begin{equation}
\mathcal{A}(t,\theta)\ge t^{n-1},
\end{equation}
where $t^{n-1}$ is the volume element of the Euclidean space.
While in geodesic normal coordinates about $x$, we have $$w_0(x,y)=\bigl(\, \text{det } g_{ij}(y)\, \bigr)^{-\frac14},$$ (see \cite{Berard}, \cite{Had} or \S2.4 in \cite{let}).  If $y$ has geodesic polar coordinates $(t,\Theta)$ about $x$, then $t=d_{\tilde g}(x,y)$, so that $w_0(x,y)=\sqrt{t^{n-1}/{\mathcal A}(t,\Theta)}\leq1$.

Therefore,
\begin{equation}\label{2.18}
\begin{split}
|\sum_{\pm}\int_{\mathbb{R}^2}\int_{-T}^Te^{i\Phi(\tilde{x},\tilde{y})\cdot\xi\pm it|\xi|+it\lambda}\hat{\rho}(\frac{t}{T})dtd\xi|= & |\int_{\mathbb{R}^2}e^{i\Phi(\tilde{x},\tilde{y})\cdot\xi}(\tilde{\psi}(\lambda+|\xi|)+\tilde{\psi}(\lambda-|\xi|))d\xi| \\
\leq & \int_{\mathbb{R}^2}|\tilde{\psi}(\lambda+|\xi|)|d\xi+\int_{\mathbb{R}^2}|\tilde{\psi}(\lambda-|\xi|)|d\xi
\end{split}
\end{equation}

Note that $\tilde{\psi}(\lambda+|\xi|)=O(T(1+\lambda+|\xi|)^{-N})$, for any $N\in\mathbb{N}$, so $\int_{\mathbb{R}^2}\tilde{\psi}(\lambda+|\xi|)d\xi$ can be arbitrarily small, while $\tilde{\psi}(\lambda-|\xi|)=O(T(1+T|\lambda-|\xi||)^{-N})$, for any $N\in\mathbb{N}$, so that $\int_{\mathbb{R}^2}|\tilde{\psi}(\lambda-|\xi|)|d\xi\lesssim T\int_{\lambda-1\leq|\xi|\leq\lambda+1}(1+T|\lambda-|\xi||)^{-N}d\xi=O(\lambda)$, provided that $\lambda\geq1$. So
\begin{equation}\label{2.19}
\int_{-T}^T\hat{\rho}(\frac{t}{T})(\cos t\sqrt{-\Delta_{\tilde{g}}})(\tilde{x},\tilde{y})e^{it\lambda}dt=O(\lambda)+O(\lambda^{-1}\exp(C_N d_{\tilde{g}}(\tilde{x},\tilde{y}))),
\end{equation}
disregarding the integral of the remainder kernel.

However, this estimate can be improved when $d_{\tilde{g}}(\tilde{x},\tilde{y})\geq\frac{1}{\lambda}$.

As we can see, the main term of
\begin{equation}
\cos(t\sqrt{-\Delta_{\tilde{g}}})(\tilde{x},\tilde{y})=\dfrac{w_0(\tilde{x},\tilde{y})}{4\pi^2}\sum_{\pm}\int_{\mathbb{R}^2}e^{i\Phi(\tilde{x},\tilde{y})\cdot\xi\pm
it|\xi|}d\xi+\sum_{\nu=1}^N w_\nu(\tilde{x},\tilde{y})\mathcal{E}_\nu(t,d_{\tilde{g}}(\tilde{x},\tilde{y}))+R_N(t,\tilde{x},\tilde{y})
\end{equation}
comes from the first term, and the corresponding term in $\int_{-T}^T\hat{\rho}(\frac{t}{T})(\cos t\sqrt{-\Delta_{\tilde{g}}})(\tilde{x},\tilde{y})e^{it\lambda}dt$ is bounded by
\begin{equation}
C|\sum_\pm\int_{-T}^T\int_{\mathbb{R}^2}\hat{\rho}(\frac{t}{T})e^{i\Phi(\tilde{x},\tilde{y})\cdot\xi\pm it|\xi|}e^{it\lambda}dtd\xi|=  C|\sum_\pm\int_{-T}^T\int_0^\infty\int_0^{2\pi}\hat{\rho}(\frac{t}{T})e^{ir\Phi(\tilde{x},\tilde{y})\cdot\Theta\pm itr+it\lambda}rdtdrd\theta|.
\end{equation}

Integrate with respect to $t$ first, then the quantity above is bounded by a constant times

\begin{equation}
\sum_\pm\int_0^\infty\int_0^{2\pi}\tilde{\psi}(\lambda\pm r)e^{ir\Phi(\tilde{x},\tilde{y})\cdot\Theta}rd\theta dr.
\end{equation}

Because $\tilde{\psi}(\lambda\pm r)\lesssim T(1+T|\lambda\pm r|)^{-N}$ for any $N>0$, the term with $\tilde{\psi}(\lambda+r)$ in the sum is $O(1)$, while the other term with $\tilde{\psi}(\lambda-r)$ is significant only when $r$ is comparable to $\lambda$, say, $c_1\lambda<r<c_2\lambda$ for some constants $c_1$ and $c_2$. In this case, as we assumed that $d_{\tilde{g}}(\tilde{x},\tilde{y})\geq\frac{1}{\lambda}$, we can also assume that $d_{\tilde{g}}(\tilde{x},\tilde{y})\gtrsim\frac{1}{r}$.\par

By Lemma \ref{lemma4.3}, $\int_0^{2\pi}e^{iw\cdot\Theta}d\theta=\sqrt{2\pi} |w|^{-1/2}\sum_{\pm}e^{\pm i|w|}+O(|w|^{-3/2}),|w|\geq1$, where $w=r\Phi(\tilde{x},\tilde{y})$. Integrate up $\theta$, the above quantity is then controlled by

\begin{equation}
\begin{split}
& |\sum_\pm\int_{c_1\lambda}^{c_2\lambda}\tilde{\psi}(\lambda-r)|rd_{\tilde{g}}(\tilde{x},\tilde{y})|^{-1/2}e^{\pm ird_{\tilde{g}}(\tilde{x},\tilde{y})}rdr+\int_{c_1\lambda}^{c_2\lambda}\tilde{\psi}(\lambda-r)|rd_{\tilde{g}}(\tilde{x},\tilde{y})|^{-3/2}rdr|\\
\leq & d_{\tilde{g}}(\tilde{x},\tilde{y})^{-1/2}\int_{c_1\lambda}^{c_2\lambda}\tilde{\psi}(\lambda-r)r^{1/2}dr+d_{\tilde{g}}(\tilde{x},\tilde{y})^{-3/2}\int_{c_1\lambda}^{c_2\lambda}\tilde{\psi}(\lambda-r)r^{-1/2}dr\\
= & d_{\tilde{g}}(\tilde{x},\tilde{y})^{-1/2}O(\lambda^{1/2})+O(d_{\tilde{g}}(\tilde{x},\tilde{y})^{-1})\\
= & O((\frac{\lambda}{d_{\tilde{g}}(\tilde{x},\tilde{y})})^{1/2})
\end{split}
\end{equation}
Note that these two equalities are still valid when $c_1$ and $c_2$ are changed to 0 and $\infty$.

Therefore, when $d_{\tilde{g}}(\tilde{x},\tilde{y})\geq\frac{1}{\lambda}$,
\begin{equation}
|\dfrac{w_0(\tilde{x},\tilde{y})}{4\pi^2}\sum_{\pm}\int_{\mathbb{R}^2}\hat{\rho}(\frac{t}{T})e^{i\Phi(\tilde{x},\tilde{y})\cdot\xi\pm
it|\xi|}e^{it\lambda}d\xi|=O((\frac{\lambda}{d_{\tilde{g}}(\tilde{x},\tilde{y})})^{\frac{1}{2}}).
\end{equation}

Now we have finished the estimates for $\alpha=\textit{Id}$. For $\alpha\neq \textit{Id}$, note that we can find a constant $C_p$ that is different from 0, depending on the universal covering, $p$, of the manifold $M$, such that
\begin{equation}\label{3.37}
d_{\tilde{g}}(\tilde{x},\alpha(\tilde{y}))>C_p,
\end{equation}
for all $\alpha\in\Aut$ different from $Id$. The constant $C_p$ comes from the fact that if we assume that the injectivity radius of $M$ is greater than a number, say, 1, and that $x$ is the center of some geodesic ball with radius one contained in $M$, then we can choose the fundamental domain $D$ such that $x$ is at least some distance, say, $C_p>1$, away from any translation of $D$, which we denote as $\alpha(D)$, for any $\alpha\in\Aut$ that is not identity. Therefore, we may use the estimates for $d_{\tilde{g}}(\tilde{x},\tilde{y})\geq\frac{1}{\lambda}$ before, assuming $\lambda$ is larger than $\frac{1}{C_p}$. Use the Hadamard parametrix, (see \cite{SZ}), similarly as before, estimating only the main term,
\begin{equation}\label{3.13}
\begin{split}
& |\int_{-T}^T\hat{\rho}(\frac{t}{T})(\cos t\sqrt{-\Delta_{\tilde{g}}})(\tilde{x},\alpha(\tilde{y}))e^{it\lambda}dt|\\
\lesssim & |(2\pi)^{-2}\int_{\mathbb{R}^2}\int_{-T}^T\hat{\rho}(\frac{t}{T})e^{i\Phi(\tilde{x},\alpha(\tilde{y}))\cdot\xi}\cos(t|\xi|)e^{it\lambda}dt|\\
\lesssim & \sum_{\pm}|\int_0^{2\pi}\int_0^\infty\int^T_{-T}e^{ir\Phi(\tilde{x},\alpha(\tilde{y}))\cdot\Theta\pm itr+it\lambda}\hat{\rho}(\frac{t}{T})rdtdrd\theta|\\
\lesssim & \sum_\pm\int_0^\infty\int_0^{2\pi}\tilde{\psi}(\lambda-r)e^{ir\Phi(\tilde{x},\alpha(\tilde{y}))\cdot\Theta\pm itr+it\lambda}rd\theta dr\\
\leq & \sum_\pm\int_0^\infty\tilde{\psi}(\lambda-r)|rd_{\tilde{g}}(\tilde{x},\alpha(\tilde{y}))|^{-\frac{1}{2}}e^{ir\psi(\tilde{x},\alpha(\tilde{y}))\cdot\Theta\pm itr+it\lambda}rdr+\int_0^\infty\tilde{\psi}(\lambda-r)|rd_{\tilde{g}}(\tilde{x},\alpha(\tilde{y}))|^{-\frac{3}{2}}rdr\\
= & O\Big(\big(\dfrac{\lambda}{d_{\tilde{g}}(\tilde{x},\alpha(\tilde{y}))}\big)^{\frac{1}{2}}\Big).
\end{split}
\end{equation}

Now we have shown all the estimates \eqref{2.9}, \eqref{2.10}, and \eqref{2.11}. Totally, $K(x,y)$ is
\begin{equation}\label{3.33}O(\dfrac{1}{T}(\dfrac{\lambda}{\lambda^{-1}+d_{\tilde{g}}(\tilde{x},\tilde{y})})^{\frac{1}{2}})+\sum_{Id\neq\alpha\in \Aut}[O(\dfrac{1}{T}(\dfrac{\lambda}{d_{\tilde{g}}(\tilde{x},\alpha(\tilde{y}))})^{1/2})+O(\dfrac{e^{ET}}{T})],
\end{equation}
where $E=\max\{C_N,d\}+1$.

Note that, by the finite propagation speed of the wave operator $\partial^2_t-\Delta_{\tilde{g}}$, $d_{\tilde{g}}(\tilde{x},\alpha(\tilde{y}))\leq T$ in the support of $\cos(t\sqrt{-\Delta_g})(\tilde{x},\alpha(\tilde{y}))$. While $M$ is a compact manifold with nonpositive curvature, the number of terms of $\alpha$'s such that $d_{\tilde{g}}(\tilde{x},\alpha(\tilde{y}))\leq T$ is at most $e^{cT}$\footnote{The number of terms of $\alpha$'s such that $d_{\tilde{g}}(\tilde{x},\alpha(\tilde{y}))\leq T$ is also bounded below by $e^{c'T}$ for some constant $c'$ depending on the curvature of the manifold, according to G\"unther and Bishop's Comparison Theorem in \cite{IS} (also see \cite{SZ}).}, for some constant $c$ depending on the curvature, by the Bishop Comparison Theorem (see \cite{IS}\cite{SZ}).

We take the $L^r(\gamma)$ norms of each individual terms first, then by the Minkowski's inequality, $||K(x,\cdot)||_{L^r(\gamma[-1,1])}$ is bounded by the sum. Also note that we may consider the geodesic distance to be comparable to the arc length of the geodesic.

The first term is simple, and it is controlled by a constant times
\begin{equation}
\dfrac{1}{T}(\int_0^1(\dfrac{\lambda}{\lambda^{-1}+\tau})^{\frac{r}{2}}d\tau)^{1/r}=O(\dfrac{\lambda^{\frac{p-2}{p}}}{T}).
\end{equation}

Accounting in the number of terms of those $\alpha$'s, the second term is bounded by a constant times
\begin{equation}
e^{cT}\cdot\dfrac{\lambda^{\frac{1}{2}}}{T}(\int_0^1(\frac{1}{C_p})^{\frac{r}{2}}d\tau)^{\frac{1}{r}}=O(e^{cT}\dfrac{\lambda^{\frac{1}{2}}}{T}).
\end{equation}

Therefore,
\begin{equation}\label{3.17}
\begin{split}
||K(x,\cdot)||_{L^r(\gamma[-1,1])}= & O(\dfrac{\lambda^{\frac{p-2}{p}}}{T})+O(e^{cT}\frac{\lambda^{\frac{1}{2}}}{T})+O(\dfrac{e^{(c+E)T}}{T})\\
= & I+II+III.
\end{split}
\end{equation}

Now take $T=\beta\log\lambda$, where $\beta\leq\dfrac{p-4}{2(c+E)p}$. (Note that we can assume that $c\neq0$, otherwise, there is only one $\alpha$ that we are considering, which is $\alpha=Id$.) Then
\begin{equation}
I=II=O(\dfrac{\lambda^{\frac{p-2}{p}}}{\log\lambda}),
\end{equation}
and
\begin{equation}
III=o(\dfrac{\lambda^{\frac{p-2}{p}}}{\log\lambda}).
\end{equation}

Summing up, we get that
\begin{equation}
||K(x,\cdot)||_{L^r(\gamma[-1,1])}=O\big(\frac{\lambda^{\frac{p-2}{p}}}{\log\lambda}\big).
\end{equation}

Now apply Young's inequality, with $r=\frac{p}{2}$, we get that $$\forall f\in L^{p'}(\gamma),||\chi^\lambda_T (\chi^\lambda_T)^* f||_{L^p(\gamma)}\lesssim\frac{(1+\lambda)^{1-\frac{2}{p}}}{\log\lambda}||f||_{L^{p'}(\gamma)}.$$ Therefore, Theorem~\ref{theorem2} is proved.

\newsection{Higher dimensions, $n\geq3$}

Now we move on to the case for $n\geq3$. While we want to show Theorem \ref{theorem1} for the full range of $p$ directly, we can only show it under the condition that $p>\frac{4k}{n-1}$ using the same method as in the last section. Although we only need $p=\infty$ later to interpolate and get to the full version of Theorem \ref{theorem1}, we will show the most as we can for the moment.

\begin{theorem}\label{theorem4.1}
Let $(M,g)$ be a  compact smooth $n$-dimensional boudaryless Riemannian manifold with nonpositive curvature, and $\Sigma$ be an $k$-dimensional compact smooth submanifold on $M$, then for any $f\in L^2(M)$, we have the following estimate
\begin{equation}\label{4.1}
||\sum_{|\lambda_j-\lambda|\leq(\log\lambda)^{-1}}E_jf||_{L^p(\Sigma)}\lesssim\frac{\lambda^{\delta(p)}}{(\log \lambda)^{\frac{1}{2}}}||f||_{L^2(M)},\ \ \ \forall p>\frac{4k}{n-1},
\end{equation}
where
\begin{equation}
\delta(p)=\frac{n-1}{2}-\frac{k}{p}.
\end{equation}
\end{theorem}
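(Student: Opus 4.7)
My plan is to adapt the Section~3 argument to general dimension $n \geq 3$, with the new feature being the integration over a $k$-dimensional submanifold $\Sigma$. Following Section~2, it suffices to show
$$\sup_{x \in \Sigma} \|K(x,\cdot)\|_{L^{p/2}(\Sigma)} \lesssim \lambda^{2\delta(p)}/\log\lambda$$
and then apply Young's inequality with $1/r = 2/p$. By a partition of unity I may assume $\Sigma$ has small diameter, pick a base point $x$, lift to the universal cover $(\mathbb{R}^n, \tilde g) \to M$, and write
$$K(x,y) = \frac{1}{\pi T} \sum_{\alpha \in \Aut} \int_{-T}^T \hat{\rho}(t/T) (\cos t\sqrt{-\Delta_{\tilde g}})(\tilde x, \alpha(\tilde y)) e^{it\lambda}\, dt,$$
and then apply the Hadamard parametrix exactly as in \eqref{2.14}, with $w_0$ bounded via G\"unther's comparison theorem and higher $w_\nu$ growing at most exponentially in $d_{\tilde g}(\tilde x, \alpha(\tilde y))$ by Theorem~\ref{theorem3.3}, and with remainder $R_N$ and lower-order terms $\mathcal{E}_\nu$, $\nu \geq 1$, controlled by \eqref{3.5} and \eqref{3.22}.

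The next step is to obtain the $n$-dimensional analogues of the pointwise estimates \eqref{2.9}--\eqref{2.11} on the principal term. Using Lemma~\ref{lemma4.3} (stationary phase on $\mathbb{S}^{n-1}$) exactly as in \eqref{3.13}, the oscillatory integral in $\xi$ after passing to polar coordinates acquires a factor $|rd_{\tilde g}(\tilde x, \alpha(\tilde y))|^{-(n-1)/2}$ rather than $|rd|^{-1/2}$, which combined with the rapid decay of $\tilde\psi(\lambda - r)$ near $r \sim \lambda$ gives
$$\Bigl|\int_{-T}^T \hat\rho(t/T) (\cos t\sqrt{-\Delta_{\tilde g}})(\tilde x, \alpha(\tilde y)) e^{it\lambda}\, dt\Bigr| = O\Bigl((\lambda/d_{\tilde g}(\tilde x, \alpha(\tilde y)))^{(n-1)/2}\Bigr)$$
when $d_{\tilde g}(\tilde x, \alpha(\tilde y)) \geq \lambda^{-1}$, and $O(\lambda^{n-1})$ otherwise (in the $\alpha = \mathrm{Id}$ case), modulo the same exponentially growing errors $O(\lambda^{-1}\exp(Cd_{\tilde g}(\tilde x, \alpha(\tilde y)))) + O(e^{dT})$ as in Section~3.

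The principal new calculation, and in my view the main obstacle, is the $L^{p/2}(\Sigma)$ integration of this kernel; this is precisely the place where the hypothesis $p > 4k/(n-1)$ must enter. Parametrizing $\Sigma$ near $x$ so that the volume element is comparable to $\tau^{k-1}\,d\tau$ in the geodesic radius $\tau = d_g(x,y)$, the contribution of the $\alpha = \mathrm{Id}$ term to $\|K(x,\cdot)\|_{L^{p/2}(\Sigma)}^{p/2}$ is controlled by
$$\int_0^1 \Bigl(\frac{\lambda}{\lambda^{-1}+\tau}\Bigr)^{p(n-1)/4} \tau^{k-1}\, d\tau.$$
Splitting at $\tau = \lambda^{-1}$, the integrand is $\lambda^{p(n-1)/2}\tau^{k-1}$ on the inner piece and $\lambda^{p(n-1)/4}\tau^{k-1-p(n-1)/4}$ on the outer piece. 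The outer integral is dominated by its lower endpoint precisely when $p(n-1)/4 > k$, i.e.\ $p > 4k/(n-1)$, in which case both pieces contribute $\sim \lambda^{p(n-1)/2 - k}$. Taking the $(p/2)$-th root gives the sharp bound $\lambda^{(n-1) - 2k/p} = \lambda^{2\delta(p)}$. When $p \leq 4k/(n-1)$ the outer integral instead contributes $\lambda^{p(n-1)/4}$, giving only $\lambda^{(n-1)/2}$ after the root, and the method fails to recover $\lambda^{2\delta(p)}$; this is why Theorem~\ref{theorem4.1} is restricted to $p > 4k/(n-1)$ and the full range in Theorem~\ref{theorem1} must be obtained by interpolation.

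For the deck transformations $\alpha \neq \mathrm{Id}$, I would use the uniform separation $d_{\tilde g}(\tilde x, \alpha(\tilde y)) \geq C_p$ from \eqref{3.37} and the Bishop comparison bound $\#\{\alpha : d_{\tilde g}(\tilde x, \alpha(\tilde y)) \leq T\} = O(e^{cT})$ as in Section~3, so that the total $\alpha \neq \mathrm{Id}$ contribution to $\|K(x,\cdot)\|_{L^{p/2}(\Sigma)}$ is at most $C\,T^{-1} e^{cT}\lambda^{(n-1)/2}$. Under the hypothesis $p > 4k/(n-1)$ we have $(n-1)/2 < n-1 - 2k/p = 2\delta(p)$, so taking $T = \beta \log\lambda$ with $\beta > 0$ sufficiently small (depending on $c$, $E = \max\{C_N, d\}$, and $2\delta(p) - (n-1)/2$) makes this, and the parametrix error term $e^{(c+E)T}/T$, negligible compared to $\lambda^{2\delta(p)}/\log\lambda$. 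Combining the $\alpha = \mathrm{Id}$ and $\alpha \neq \mathrm{Id}$ estimates, taking the $L^{p/2}$ norm via Minkowski, and concluding with Young's inequality exactly as at the end of Section~3 then yields \eqref{4.1}.
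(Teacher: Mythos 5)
Your argument is correct and follows essentially the same path as the paper: Hadamard parametrix with $w_0$ bounded via G\"unther's comparison, $w_\nu$ controlled via Theorem~\ref{theorem3.3}, stationary phase on $\mathbb{S}^{n-1}$ giving the $(\lambda/d)^{(n-1)/2}$ kernel bound, exponential lattice-point counting via Bishop's comparison, and $T=\beta\log\lambda$ with $\beta$ chosen using $(n-1)/2<2\delta(p)$. The only stylistic difference is that the paper isolates the short-time contribution by inserting $(1-\varphi(t))$ and handles $|t|\leq 1$ separately in Lemma~\ref{lemma4.4} (a split it needs later for the interpolation in Section~5, as its own footnote notes), whereas you treat the $\alpha=\mathrm{Id}$ term directly via the split of $\int_0^1(\lambda/(\lambda^{-1}+\tau))^{p(n-1)/4}\tau^{k-1}d\tau$ at $\tau=\lambda^{-1}$ --- a calculation that is in fact where the paper's Lemma~\ref{lemma4.4} also implicitly uses $p>4k/(n-1)$.
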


\begin{remark}
Note that although this estimate is not complete (that works for all $p>2$) for general numbers $k<n$, we get the complete range of $p\geq2$ when $k$ and $n$ satisfy $\frac{4k}{n-1}<2$. That means that we get the improvement for all $p\geq2$ when $k=1$, $n>3$; $k=2$, $n>5$; etc..
\end{remark}

For $n\geq3$, for the sake of using interpolation later, we need to insert a bump function\footnote{We do not need the bump function if we simply want to prove Theorem \ref{theorem4.1}.}. Take $\varphi\in C_0^\infty(\mathbb{R})$ such that $\varphi(t)=1$ when $|t|\leq\frac{1}{2}$ and $\varphi(t)=0$ when $|t|>1$. Then we only have to consider the following kernel\footnote{This kernel is different from the one in \eqref{2.5}.}
\begin{equation}
K(x,y)=\frac{1}{\pi T}\int_{-T}^T(1-\varphi(t))\hat{\rho}(\frac{t}{T})(\cos t\sqrt{-\Delta_g})(x,y))e^{it\lambda}dt,
\end{equation}
which is non-zero only when $|t|>\frac{1}{2}$. In the following discussion, we may sometimes only show estimates for $K(x,y)$ when $t>\frac{1}{2}$, as the part for $t<-\frac{1}{2}$ can be done similarly.

The reason why we only consider the above kernel $K(x,y)$ is because of the following lemma.
\begin{lemma}\label{lemma4.4}
For $\varphi\in C_0^\infty(\mathbb{R})$ such that $\varphi(t)=1$ when $|t|\leq\frac{1}{2}$ and $\varphi(t)=0$ when $|t|>1$. Let
\begin{equation}
\tilde{K}(x,y)=\frac{1}{\pi T}\int_{-1}^1\varphi(t)\hat{\rho}(\frac{t}{T})(\cos t\sqrt{-\Delta_g})(x,y)e^{it\lambda}dt,
\end{equation}
then
\begin{equation}
\sup_x||\tilde{K}(x,\cdot)||_{L^r(\Sigma)}=O(\dfrac{\lambda^{2\delta(p)}}{\log\lambda}).
\end{equation}
\end{lemma}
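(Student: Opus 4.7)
The kernel $\tilde{K}(x,y)$ is the $|t|\le 1$ piece of $\rho_T^\lambda$, and on this short time window it essentially coincides, up to the overall factor $1/T$, with a standard short-time half-wave kernel of the type used by Burq--G\'erard--Tzvetkov in \cite{burq}. The point is that for $|t|\le 1\ll T\approx\log\lambda$ the factor $\hat{\rho}(t/T)$ is close to $\hat{\rho}(0)=1$, so $\Psi_T(t):=\varphi(t)\hat{\rho}(t/T)$ is a compactly supported smooth bump with derivatives bounded uniformly in $T$. In particular no lifting to the universal cover is needed here: the Hadamard parametrix is valid on a single coordinate patch for the short time range $|t|\le 1$ once $\lambda$ is large, so I will work directly on $(M,g)$.

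\textbf{Plan.} First I insert the Hadamard parametrix \eqref{2.14} for $(\cos t\sqrt{-\Delta_g})(x,y)$. The remainder $R_N$ is $O(1)$ on $|t|\le 1$, and the terms $w_\nu\mathcal{E}_\nu$ with $\nu\ge 1$ contribute at most $O(\lambda^{n-1-2\nu})$ by \eqref{3.22} (no factor $e^{cd_g}$ appears here, since we are not lifting); both are negligible. For the leading term, I integrate in $t$ first: $\int\Psi_T(t)e^{it(\lambda\pm|\xi|)}\,dt$ is, uniformly in $T$, a Schwartz function of $\lambda\pm|\xi|$ that concentrates $|\xi|$ near $\lambda$. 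Passing to polar coordinates $\xi=r\Theta$ and applying Lemma~\ref{lemma4.3} to the spherical integral, together with the bound $|w_0|=O(1)$ from G\"unther's theorem, yields the pointwise estimate
\begin{equation}\nonumber
|\tilde{K}(x,y)|\lesssim \frac{1}{T}\Bigl(\frac{\lambda}{\lambda^{-1}+d_g(x,y)}\Bigr)^{(n-1)/2},\qquad d_g(x,y)\le 1,
\end{equation}
with $\tilde{K}(x,y)=0$ for $d_g(x,y)>1$ by finite propagation speed. Finally, taking the $L^r(\Sigma)$ norm with $r=p/2$ in geodesic polar coordinates on the $k$-dimensional submanifold gives
\begin{equation}\nonumber
\|\tilde{K}(x,\cdot)\|_{L^r(\Sigma)}^r \lesssim T^{-r}\lambda^{r(n-1)/2}\int_0^1 (\lambda^{-1}+s)^{-r(n-1)/2}\,s^{k-1}\,ds,
\end{equation}
which, under the standing hypothesis $p>4k/(n-1)$ (equivalently $r(n-1)/2>k$), is $O(T^{-r}\lambda^{r(n-1)-k})$. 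Taking $r$-th roots yields $\|\tilde{K}(x,\cdot)\|_{L^r(\Sigma)}=O(\lambda^{(n-1)-k/r}/T)=O(\lambda^{2\delta(p)}/\log\lambda)$, as claimed.

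\textbf{Where the difficulty lies.} There is no serious obstacle here: the whole argument is essentially a reprise of the short-time Hadamard-parametrix/stationary-phase computation from \cite{burq}, with the extra factor $1/T$ coming out for free from the normalization of $\rho_T^\lambda$. The only technical point is checking that the derivatives of $\hat{\rho}(t/T)$, which introduce factors of $T^{-1}$, do not degrade any stationary-phase integrations by parts; since they only improve the decay, this is actually helpful and not a difficulty. The hypothesis $p>4k/(n-1)$ is used precisely to control the $L^r$ integral at its lower endpoint $s\sim\lambda^{-1}$; at the endpoint $p=4k/(n-1)$ one would pick up an extra $\log\lambda$ factor that would cancel the $1/\log\lambda$ gain, which is consistent with the sharpness of the $(\log\lambda)^{1/2}$ in \eqref{1.6}.
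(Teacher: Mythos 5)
Your proposal is correct and is essentially the same argument the paper gives (Hadamard parametrix plus Lemma~\ref{lemma4.3}, bound $|w_0|=O(1)$, then the $L^r$ integral with $r=p/2$ using $r(n-1)/2>k$), and your bookkeeping that the remainder is $O(1)$ on $|t|\le 1$ is actually cleaner than the paper's brief reference to the long-time estimate. The paper does keep the universal-cover decomposition formally and then notes only $O(e^c)$ translates survive for $|t|\le 1$ by finite propagation speed; you drop the lift entirely, which is a harmless simplification here since the injectivity radius exceeds $1$ so the short-time parametrix is valid directly on $M$.
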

We will postpone the proof to the end of this section.

Now we are ready to prove Theorem \ref{theorem4.1}, which is essentially the same as the lower dimension case, and what we need to show is \eqref{2.6}. By a partition of unity, we may choose some point $x\in\Sigma$, and consider $\Sigma$ to be within a ball with geodesic radius 1 centered at $x$, and under the geodesic normal coordinates centered at $x$, parametrize $\Sigma$ as
\begin{equation}\nonumber
\Sigma=\{(t,\Theta)|y=\exp_x(t\Theta)\in\Sigma,t\in[-1,1], \Theta\in\mathbb{S}^{k-1}\}
\end{equation}

Applying the Hadamard Parametrix, for any $\alpha\in\Aut$,
\begin{equation}\label{4.9}
\cos(t\sqrt{-\Delta_{\tilde{g}}})(\tilde{x},\alpha(\tilde{y}))=\dfrac{w_0(\tilde{x},\alpha(\tilde{y}))}{(2\pi)^n}\sum_{\pm}\int_{\mathbb{R}^n}e^{i\Phi(\tilde{x},\alpha(\tilde{y}))\cdot\xi\pm
it|\xi|}d\xi+\sum_{\nu=1}^\infty w_\nu(\tilde{x},\tilde{y})\mathcal{E}_\nu(t,d_{\tilde{g}}(\tilde{x},\alpha(\tilde{y})))+R_N(t,\tilde{x},\alpha(\tilde{y})),
\end{equation}
where
$|\Phi(\tilde{x},\alpha(\tilde{y}))|=d_{\tilde{g}}(\tilde{x},\alpha(\tilde{y}))$, and $\mathcal{E}_\nu,\nu=1,2,3,...$ are those described in Section 3.

By Theorem \ref{theorem3.3},
\begin{equation}
\int_{-T}^T|R_N(t,\tilde{x},\alpha(\tilde{y}))|dt\lesssim\int_0^Te^{dt}dt=O(e^{dT}).
\end{equation}
Moreover, by \eqref{3.22}, for $\nu=1,2,3,...$,
\begin{equation}|\int_{-T}^T(1-\varphi(t))\hat{\rho}(\frac{t}{T})e^{it\lambda}\mathcal{E}_\nu(t,d_{\tilde{g}}(\tilde{x},\alpha(\tilde{y})))dt|=O(T(T\lambda)^{n-1-2\nu}).
\end{equation}

Since $|w_\nu(\tilde{x},\alpha(\tilde{y}))|=O(\exp(c_\nu d_{\tilde{g}}(\tilde{x},\alpha(\tilde{y}))))$ by \cite{Berard}, for some constant $c_\nu$ depending on $\nu$,
\begin{equation}
\begin{split}
&\sum_{\nu=1}^N |w_\nu(\tilde{x},\alpha(\tilde{y}))\int_{-T}^T(1-\varphi(t))\hat{\rho}(\frac{t}{T})e^{it\lambda}\mathcal{E}_\nu(t,d_{\tilde{g}}(\tilde{x},\alpha(\tilde{y})))dt|\\
=&\sum_{\nu=1}^N O(T(T\lambda)^{n-1-2\nu}\exp(c_\nu d_{\tilde{g}}(\tilde{x},\alpha(\tilde{y}))))\\
=&O(T^{n-2}\lambda^{n-3}\exp(C_N d_{\tilde{g}}(\tilde{x},\alpha(\tilde{y})))),
\end{split}
\end{equation}
for some $C_N$ depending on $c_1, c_2,..., c_{N-1}$.

All in all, disregarding the integral of the remainder kernel,
\begin{multline}
|\int_{-T}^T(1-\varphi(t))\hat{\rho}(\frac{t}{T})\cos(t\sqrt{-\Delta_{\tilde{g}}})(\tilde{x},\alpha(\tilde{y}))e^{it\lambda}dt|\\
=|\int_{-T}^T(1-\varphi(t))\hat{\rho}(\frac{t}{T})\dfrac{w_0(\tilde{x},\tilde{y})}{(2\pi)^n}\sum_{\pm}\int_{\mathbb{R}^2}e^{i\Phi(\tilde{x},\alpha(\tilde{y}))\cdot\xi\pm
it|\xi|}e^{it\lambda}d\xi dt|+O(T^{n-2}\lambda^{n-3}\exp(C_N d_{\tilde{g}}(\tilde{x},\alpha(\tilde{y})))).
\end{multline}

On the other hand, $|w_0(\tilde{x},\tilde{y})|=O(1)$ (see \cite{SZ}) by applying G\"unther's Comparison Theorem in \cite{Gu}, and for
\begin{equation}\label{4.13}
|\sum_{\pm}\int_{\mathbb{R}^n}\int_{-T}^T(1-\varphi(t))e^{i\Phi(\tilde{x},\alpha(\tilde{y}))\cdot\xi\pm it|\xi|+it\lambda}\hat{\rho}(\frac{t}{T})dtd\xi|,
\end{equation}
as we may assume as before that $d_{\tilde{g}}(\tilde{x},\alpha(\tilde{y}))>\frac{1}{2}$ by the stationary phase estimates in \cite{soggebook}.

Denote that $\psi(t)=(1-\varphi(t))\hat{\rho}(\frac{t}{T})$, and $\tilde{\psi}$ is the inverse Fourier Transform of $\psi$.

Again we have, $\tilde{\psi}(\lambda+|\xi|)=O(T(1+\lambda+|\xi|)^{-N})$, for any $N\in\mathbb{N}$, so $\int_{\mathbb{R}^n}\tilde{\psi}(\lambda+|\xi|)d\xi$ can be arbitrarily small, while $\tilde{\psi}(\lambda-|\xi|)=O(T(1+T|\lambda-|\xi||)^{-N})$.

Integrate \eqref{4.13} with respect to $t$ first, then it is bounded by a constant times
\begin{equation}
\sum_\pm\int_0^\infty\int_{\mathbb{S}^{n-1}}\tilde{\psi}(\lambda\pm r)e^{ir\Phi(\tilde{x},\alpha(\tilde{y}))\cdot\Theta}r^{n-1}d\Theta dr.
\end{equation}

Because $\tilde{\psi}(\lambda\pm r)\leq T(1+T|\lambda\pm r|)^{-N}$ for any $N>0$, the term with $\tilde{\psi}(\lambda+r)$ in the sum is $O(1)$, while the other term with $\tilde{\psi}(\lambda-r)$ is significant only when $r$ is comparable to $\lambda$, say, $c_1\lambda<r<c_2\lambda$ for some constants $c_1$ and $c_2$. In this case, as we assumed that $d_{\tilde{g}}(\tilde{x},\alpha(\tilde{y}))\geq D$, we can also assume that $d_{\tilde{g}}(\tilde{x},\alpha(\tilde{y}))\gtrsim\frac{1}{r}$ for large $\lambda$.

By Lemma \ref{lemma4.3}, $\int_{\mathbb{S}^{n-1}}e^{iw\cdot\Theta}d\Theta=\sqrt{2\pi}^{n-1} |w|^{-\frac{n-1}{2}}\sum_{\pm}e^{\pm i|w|}+O(|w|^{-\frac{n+1}{2}}),|w|\geq1$, where $w=r\Phi(\tilde{x},\alpha(\tilde{y}))$. Integrate up $\Theta$, the above quantity is then controlled by

\begin{equation}
\begin{split}
& |\sum_\pm\int_{c_1\lambda}^{c_2\lambda}\tilde{\psi}(\lambda-r)|rd_{\tilde{g}}(\tilde{x},\alpha(\tilde{y}))|^{-\frac{n-1}{2}}e^{\pm ird_{\tilde{g}}(\tilde{x},\tilde{y})}r^{n-1}dr+\int_{c_1\lambda}^{c_2\lambda}\tilde{\psi}(\lambda-r)|rd_{\tilde{g}}(\tilde{x},\alpha(\tilde{y}))|^{-\frac{n+1}{2}}r^{n-1}dr|\\
\leq & d_{\tilde{g}}(x,y)^{-\frac{n-1}{2}}\int_{c_1\lambda}^{c_2\lambda}\tilde{\psi}(\lambda-r)r^{\frac{n-1}{2}}dr+d_{\tilde{g}}(\tilde{x},\alpha(\tilde{y}))^{-\frac{n+1}{2}}\int_{c_1\lambda}^{c_2\lambda}\tilde{\psi}(\lambda-r)r^{\frac{n-3}{2}}dr\\
= & O((\frac{\lambda}{d_{\tilde{g}}(\tilde{x},\alpha(\tilde{y}))})^{\frac{n-1}{2}})
\end{split}
\end{equation}

Therefore, disregarding the integral of the remainder kernel,
\begin{equation}
\int_{-T}^T(1-\varphi(t))\hat{\rho}(\frac{t}{T})(\cos t\sqrt{-\Delta_{\tilde{g}}})(\tilde{x},\alpha(\tilde{y}))e^{it\lambda}dt=O((\frac{\lambda}{d_{\tilde{g}}(\tilde{x},\alpha(\tilde{y}))})^{\frac{n-1}{2}})+O(T^{n-2}\lambda^{n-3}\exp(C_N d_{\tilde{g}}(\tilde{x},\alpha(\tilde{y})))).
\end{equation}

Now $K(x,y)$ is
\begin{equation}\label{4.16}
 \sum_{\alpha\in \Aut}[O(\frac{1}{T}(\dfrac{\lambda}{d_{\tilde{g}}(\tilde{x},\alpha(\tilde{y}))})^{\frac{n-1}{2}})+O(\dfrac{e^{ET}}{T})],
\end{equation}
where $E=\max\{C_N,d\}+1$.

Here we still have: the number of terms of $\alpha$'s such that $d_{\tilde{g}}(\tilde{x},\alpha(\tilde{y}))\leq T$ is at most $e^{cT}$, for some constant $c$ depending on the curvature, and there exists a constant $C_p$ such that $d_{\tilde{g}}(\tilde{x},\alpha(\tilde{y}))>C_p$ for any $\alpha\in\Aut$ different from identity.

Now we take the $L^r(\Sigma)$ norms of each individual terms. By \eqref{3.37}, and accounting in the number of terms of those $\alpha$'s, the first one is bounded by a constant times
\begin{equation}
\dfrac{e^{cT}\lambda^{\frac{n-1}{2}}}{T}(\int_0^1C_p^{-\frac{n-1}{2}\cdot r}\tau^{k-1}d\tau)^{\frac{1}{r}}=O(\dfrac{e^{cT}\lambda^{\frac{n-1}{2}}}{T}).
\end{equation}

Therefore,
\begin{equation}
\begin{split}
||K(x,\cdot)||_{L^r(\Sigma)}
= &O(\dfrac{e^{cT}\lambda^{\frac{n-1}{2}}}{T})+O(\dfrac{e^{(c+E)T}}{T})\\
= & I+II.
\end{split}
\end{equation}

Now take $T=\beta\log\lambda$, where $\beta=\dfrac{\frac{n-1}{2}-\frac{2k}{p}-\delta}{c+E}$, where $\delta$ satisfies $0<\delta<\frac{n-1}{2}-\frac{2k}{p}$. Note that $\frac{n-1}{2}-\frac{2k}{p}>0$ when $p>\frac{4k}{n-1}$. Then
\begin{equation}
I=O(\dfrac{\lambda^{\beta c+\frac{n-1}{2}}}{\log\lambda})=O(\dfrac{\lambda^{\frac{n-1}{2}-\frac{2k}{p}-\delta+\frac{n-1}{2}}}{\log\lambda})=o(\dfrac{\lambda^{n-1-\frac{2k}{p}}}{\log\lambda}),
\end{equation}
and
\begin{equation}
II=O(\dfrac{\lambda^{\beta(c+E)}}{\log\lambda})=O(\dfrac{\lambda^{\frac{n-1}{2}-\frac{2k}{p}-\delta}}{\log\lambda})=o(\frac{\lambda^{n-1-\frac{2k}{p}}}{\log\lambda}).
\end{equation}

Summing up, we get that
\begin{equation}
||K(x,\cdot)||_{L^r(\Sigma)}=o\big(\dfrac{\lambda^{n-1-\frac{2k}{p}}}{\log\lambda}\big).
\end{equation}

Now apply Young's inequality, with $r=\frac{p}{2}$, together with the estimate in Lemma \ref{lemma4.4}, we have
\begin{equation}
\forall f\in L^{p'}(\Sigma),||\chi^\lambda_T (\chi^\lambda_T)^* f||_{L^p(\Sigma)}\lesssim\frac{\lambda^{n-1-\frac{2k}{p}}}{\log\lambda}||f||_{L^{p'}(\Sigma)}.
\end{equation}
Therefore, Theorem~\ref{theorem4.1} is proved.

\begin{proof}[proof of Lemma \ref{lemma4.4}]
With similar approaches as the previous discussions, we can show that $\tilde{K}(x,y)$ is
\begin{equation}\label{4.30}O(\dfrac{1}{T}(\dfrac{\lambda}{\lambda^{-1}+d_{\tilde{g}}(\tilde{x},\tilde{y})})^{\frac{n-1}{2}})
 +\sum_{Id\neq\alpha\in \Aut}[O(\dfrac{1}{T}(\dfrac{\lambda}{d_{\tilde{g}}(\tilde{x},\alpha(\tilde{y}))})^{\frac{n-1}{2}})+O(e^{ET})],
\end{equation}
where $E=\max\{C_N,d\}+1$.

Note that $|t|\leq1$ for $\varphi(t)\neq0$, and the number of terms such that $d_{\tilde{g}}(\tilde{x},\alpha(\tilde{y}))\leq1$ is at most $e^c$, so that
\begin{equation}
||\tilde{K}(x,y)||_{L^r(\Sigma)}=O(\dfrac{\lambda^{2\delta(p)}}{\log\lambda}),
\end{equation}
if we take $T=\log\lambda$ and calculate as before.
\end{proof}

\newsection{Proof of the main theorem in all dimensions}

To show Theorem \ref{theorem1}, we need to use interpolation. Recall that
\begin{equation}
\begin{split}
K(x,y)= & \frac{1}{\pi T}\int_{-T}^T(1-\varphi(t))\hat{\rho}(\frac{t}{T})(\cos t\sqrt{-\Delta_g})(x,y))e^{it\lambda}dt\\
= & \frac{1}{2\pi T}\int_{-T}^T(1-\varphi(t))\hat{\rho}(\frac{t}{T})(e^{it\sqrt{-\Delta_g}}+e^{-it\sqrt{-\Delta_g}})(x,y)e^{it\lambda}dt
\end{split}
\end{equation}
is the kernel of the operator
\begin{equation}
\begin{split}
& \dfrac{1}{2\pi T}[\sum_j\tilde{\psi}(\lambda-\lambda_j)E_j+\sum_j\tilde{\psi}(\lambda+\lambda_j)E_j]\\
= & \dfrac{1}{2\pi T}[\sum_j\tilde{\psi}(\lambda-\lambda_j)E_j]+O(1)\\
= & \dfrac{1}{2\pi T}\tilde{\psi}(\lambda-\sqrt{-\Delta_g})+O(1),
\end{split}
\end{equation}
where $\tilde{\psi}(t)$ is the inverse Fourier transform of $(1-\varphi(t))\hat{\rho}(\frac{t}{T})$ so that $|\tilde{\psi}(t)|\leq T(1+|t|)^{-N}$ for any $N\in\mathbb{N}$.

We have the following estimate for $\tilde{\psi}(\lambda-\sqrt{-\Delta_g})$.

\bth
For $k\neq n-2$,
\begin{equation}\label{4.28}
||\tilde{\psi}(\lambda-P)g||_{L^2(\Sigma)}\lesssim T\lambda^{2\delta(2)}||g||_{L^2(\Sigma)},\ \ \text{for any}\ g\in L^2(\Sigma),
\end{equation}
and for $k=n-2$,
\begin{equation}
||\tilde{\psi}(\lambda-P)g||_{L^2(\Sigma)}\lesssim T\lambda^{2\delta(2)}\log\lambda||g||_{L^2(\Sigma)},\ \ \text{for any}\ g\in L^2(\Sigma),
\end{equation}
where $P=\sqrt{-\Delta_g}$.
\end{theorem}
\begin{proof}
Recall the proof of the corresponding restriction theorem in \cite{burq}, they showed that for $\chi\in\mathcal{S}(\mathbb{R})$, and define
\begin{equation}
\chi_\lambda=\chi(\sqrt{-\Delta_g}-\lambda)=\sum_j\chi(\lambda_j-\lambda)E_j,
\end{equation}
we have
\begin{equation}
||\chi_\lambda||_{L^2(M)\rightarrow L^2(\Sigma)}=O(\lambda^{\delta(2)}),
\end{equation}
for $k\neq n-2$,
and
\begin{equation}
||\chi_\lambda||_{L^2(M)\rightarrow L^2(\Sigma)}=O(\lambda^{\delta(2)}(\log\lambda)^{\frac{1}{2}}),
\end{equation}
for $k=n-2$.

Now consider  $\tilde{\psi}(\lambda-P)$ as $S\tilde{S}^*$, where
\begin{equation}
S=\sum_j(1+|\lambda_j-\lambda|)^{-M}E_j
\end{equation}
and
\begin{equation}
\tilde{S}=\sum_j(1+|\lambda_j-\lambda|)^M\tilde{\psi}(\lambda_j-\lambda)E_j,
\end{equation}
where $M$ is some large number.

Recall that $|\tilde{\psi}(\tau)|\leq T(1+|\tau|)^{-N}$ for any $N\in\mathbb{N}$, we then have
\begin{equation}
|(1+|\lambda_j-\lambda|)^M\tilde{\psi}(\lambda_j-\lambda)|\leq T(1+|\lambda_j-\lambda|)^{-N}
\end{equation}
for any $N$.

By \eqref{1.7}, which we deduced from the proof of Theorem 3 in \cite{burq}, for a given $\lambda$,
\begin{equation}
||\sum_{\lambda_j\in(\lambda-1,\lambda+1)}E_j||_{L^2(M)\rightarrow L^2(\Sigma)}=O(\lambda^{\delta(2)}),\ \ \ \ \text{if}\ k\neq n-2
\end{equation}
and
\begin{equation}
||\sum_{\lambda_j\in(\lambda-1,\lambda+1)}E_j||_{L^2(M)\rightarrow L^2(\Sigma)}=O(\lambda^{\delta(2)}(\log\lambda)^{\frac{1}{2}}),\ \ \ \ \ \text{if}\ k=n-2
\end{equation}
so that for any $f\in L^2(M)$,
\begin{equation}
\begin{split}
&||\sum_j(1+|\lambda_j-\lambda|^{-M})E_j f||_{L^2(\Sigma)}\\
\leq  &||\sum_{\lambda_j\in(\lambda-1,\lambda+1)}E_jf||_{L^2(\Sigma)}+||\sum_{\lambda_j\not\in(\lambda-\delta,\lambda+\delta)}(1+|\lambda_j-\lambda|^{-M})E_jf||_{L^2(\Sigma)}\\
 \lesssim &\begin{cases}\lambda^{\delta(2)}||f||_{L^2(M)}+\sum_{\lambda_j\not\in(\lambda-1,\lambda+1)}(1+|\lambda_j-\lambda|)^{-M}||E_jf||_{L^2(\Sigma)},\ &\mbox{if}\ k\neq n-2,\\
\lambda^{\delta(2)}(\log\lambda)^{\frac{1}{2}}||f||_{L^2(M)}+\sum_{\lambda_j\not\in(\lambda-1,\lambda+1)}(1+|\lambda_j-\lambda|)^{-M}||E_jf||_{L^2(\Sigma)},\ &\mbox{if}\ k= n-2.
\end{cases}
\end{split}
\end{equation}
As
\begin{equation}
\begin{split}
&\sum_{\lambda_j\not\in(\lambda-1,\lambda+1)}(1+|\lambda_j-\lambda|)^{-M}||E_jf||_{L^2(\Sigma)}\\
\leq
&\begin{cases}\sum_{\lambda_j\not\in(\lambda-1,\lambda+1)}\lambda_j^{\delta(2)}(1+|\lambda_j-\lambda|)^{-M}||E_jf||_{L^2(M)},\ &\mbox{if}\ k\neq n-2,\\
\sum_{\lambda_j\not\in(\lambda-1,\lambda+1)}\lambda_j^{\delta(2)}(\log\lambda_j)^{\frac{1}{2}}(1+|\lambda_j-\lambda|)^{-M}||E_jf||_{L^2(M)},\ &\mbox{if}\ k=n-2,
\end{cases}
\end{split}
\end{equation}
which can be made arbitrarily small when $M$ is sufficiently large,
\begin{equation}
||\sum_j(1+|\lambda_j-\lambda|^{-M})E_j f||_{L^2(\Sigma)}\leq  \begin{cases}\lambda^{\delta(2)}||f||_{L^2(M)},\ &\mbox{if}\ k\neq n-2,\\
\lambda^{\delta(2)}(\log\lambda)^{\frac{1}{2}}||f||_{L^2(M)},\ &\mbox{if}\ k= n-2.
\end{cases}
\end{equation}

Similarly, we have
\begin{equation}
||\sum_j(1+|\lambda_j-\lambda|^{M})\tilde{\phi}(\lambda_j-\lambda)E_j f||_{L^2(\Sigma)}\leq  \begin{cases}T\lambda^{\delta(2)}||f||_{L^2(M)},\ &\mbox{if}\ k\neq n-2,\\
T\lambda^{\delta(2)}(\log\lambda)^{\frac{1}{2}}||f||_{L^2(M)},\ &\mbox{if}\ k= n-2.
\end{cases}
\end{equation}

Therefore,
\begin{equation}
\begin{split}
||\tilde{\psi}(\lambda-P)g||_{L^2(\Sigma)}= & ||S\tilde{S}^*g||_{L^2(\Sigma)}\\
\leq & ||S||_{L^2(M)\rightarrow L^2(\Sigma)}||\tilde{S}^*||_{L^2(\Sigma)\rightarrow L^2(M)}||g||_{L^2(\Sigma)}\\
= & ||S||_{L^2(M)\rightarrow L^2(\Sigma)}||\tilde{S}||_{L^2(M)\rightarrow L^2(\Sigma)}||g||_{L^2(\Sigma)}\\
\lesssim & \begin{cases}T\lambda^{2\delta(2)}||g||_{L^2(\Sigma)},\ \ \ \ &\mbox{if}\ k\neq n-2,\\
T\lambda^{2\delta(2)}\log\lambda||g||_{L^2(\Sigma)},\ \ \ \ &\mbox{if}\ k=n-2.
\end{cases}
\end{split}
\end{equation}
\end{proof}

Now we may finish the proof of Theorem \ref{theorem1}.

Recall that we denote $K$ as the operator whose kernel is $K(x,y)$. The above theorem tells us that,
\begin{equation}
||K||_{L^2(\Sigma)\rightarrow L^2(\Sigma)}\leq \begin{cases}O(\lambda^{2\delta(2)}), &\text{for}\ k\neq n-2;\\
O(\lambda^{2\delta(2)}\log\lambda), &\text{for}\ k=n-2.
\end{cases}
\end{equation}

Interpolating this with
\begin{equation}
||K||_{L^1(\Sigma)\rightarrow L^\infty(\Sigma)}=O(\dfrac{e^{cT}\lambda^{\frac{n-1}{2}}}{T})
\end{equation}
or
\begin{equation}
||K||_{L^1(\Sigma)\rightarrow L^\infty(\Sigma)}=O(e^{cT}\lambda^{\frac{n-1}{2}})
\end{equation}
respectively by Theorem \ref{theorem4.1}, we get that for any $p$ and $k\neq n-2$,
\begin{equation}
||K||_{L^{p'}(\Sigma)\rightarrow L^p(\Sigma)}=O(\dfrac{\lambda^{\frac{n-1}{2}(1-\frac{2}{p})}e^{cT(1-\frac{2}{p})}\lambda^{2\delta(2)\cdot\frac{2}{p}}}{T^{1-\frac{2}{p}}})=O(\dfrac{\lambda^{\frac{n-1}{2}-\frac{n-1}{p}+\frac{4\delta(2)}{p}}e^{cT(1-\frac{2}{p})}}{T^{1-\frac{2}{p}}}),
\end{equation}
and for $k=n-2$,
\begin{equation}
||K||_{L^{p'}(\Sigma)\rightarrow L^p(\Sigma)}=O(\dfrac{\lambda^{\frac{n-1}{2}-\frac{n-1}{p}+\frac{4\delta(2)}{p}}e^{cT(1-\frac{2}{p})}T^{\frac{2}{p}}}{T^{1-\frac{2}{p}}})=O(\dfrac{\lambda^{\frac{n-1}{2}-\frac{n-1}{p}+\frac{4\delta(2)}{p}}e^{cT(1-\frac{2}{p})}}{T^{1-\frac{4}{p}}}).
\end{equation}

If $k=n-1$, then $\delta(2)=\frac{1}{4}$.
\begin{equation}
||K||_{L^{p'}(\Sigma)\rightarrow L^p(\Sigma)}=O(\dfrac{\lambda^{\frac{n-1}{2}-\frac{n-2}{p}}e^{cT(1-\frac{2}{p})}}{T^{1-\frac{2}{p}}}).
\end{equation}
Since $\frac{n-1}{2}-\frac{n-2}{p}<2\delta(p)$ if $p>\frac{2n}{n-1}$, say, $\frac{n-1}{2}-\frac{n-2}{p}+\delta<2\delta(p)$ for some small number $\delta>0$, then taking $\beta=\frac{\delta}{c(1-\frac{2}{p})}$, and $T=\beta\log\lambda$, we have
\begin{equation}
||K||_{L^{p'}(\Sigma)\rightarrow L^p(\Sigma)}=O(\dfrac{\lambda^{2\delta(p)-\delta}}{T^{1-\frac{2}{p}}})=O(\dfrac{\lambda^{2\delta(p)-\delta}}{(\log\lambda)^{1-\frac{2}{p}}})= o(\dfrac{\lambda^{2\delta(p)}}{\log\lambda}),
\end{equation}
which indicates Theorem \ref{theorem1}.

If $k=n-2$,
\begin{equation}
||K||_{L^{p'}(\Sigma)\rightarrow L^p(\Sigma)}=O(\dfrac{\lambda^{\frac{n-1}{2}-\frac{n-1}{p}+\frac{4\delta(2)}{p}}e^{cT(1-\frac{2}{p})}}{T^{1-\frac{4}{p}}})=O(\dfrac{\lambda^{\frac{n-1}{2}-\frac{n-1}{p}+\frac{2}{p}}e^{cT(1-\frac{2}{p})}}{T^{1-\frac{4}{p}}}).
\end{equation}
Now since $\frac{n-1}{2}-\frac{n-1}{p}+\frac{2}{p}<(n-1)-\frac{2(n-2)}{p}$ when $p>2$, we can take $\delta>0$ such that $\frac{n-1}{2}-\frac{n-1}{p}+\frac{2}{p}+\delta<(n-1)-\frac{2(n-2)}{p}$, and take $\beta=\frac{\delta}{c(1-\frac{2}{p})}$, $T=\beta\log\lambda$, then
\begin{equation}
||K||_{L^{p'}(\Sigma)\rightarrow L^p(\Sigma)}=O(\dfrac{\lambda^{2\delta(p)-\delta}}{(\log\lambda)^{1-\frac{4}{p}}})=o(\dfrac{\lambda^{2\delta(p)}}{\log\lambda}),
\end{equation}
which is the what we need.

If $k\leq n-3$, $\delta(2)=\frac{n-1}{2}-\frac{k}{2}$, then
\begin{equation}
||K||_{L^{p'}(\Sigma)\rightarrow L^p(\Sigma)}=O(\dfrac{\lambda^{\frac{n-1}{2}-\frac{n-1}{p}+\frac{4\delta(2)}{p}}e^{cT(1-\frac{2}{p})}}{T^{1-\frac{2}{p}}})=O(\dfrac{\lambda^{\frac{n-1}{2}-\frac{n-1}{p}+\frac{2(n-1)-2k}{p}}e^{cT(1-\frac{2}{p})}}{T^{1-\frac{2}{p}}}).
\end{equation}
Since $\frac{n-1}{2}-\frac{n-1}{p}+\frac{2(n-1)-2k}{p}<(n-1)-\frac{2k}{p}=2\delta(p)$ for $p>2$, we can take $\delta>0$ such that $\frac{n-1}{2}-\frac{n-1}{p}+\frac{2(n-1)-2k}{p}+\delta<(n-1)-\frac{2k}{p}$, and take $\beta=\frac{\delta}{c(1-\frac{2}{p})}$, $T=\beta\log\lambda$, then
\begin{equation}
||K||_{L^{p'}(\Sigma)\rightarrow L^p(\Sigma)}=O(\dfrac{\lambda^{2\delta(p)-\delta}}{(\log\lambda)^{1-\frac{2}{p}}})=o(\frac{\lambda^{2\delta(p)}}{\log\lambda}),
\end{equation}
which finishes Theorem \ref{theorem1}.


\end{document}